\pgfplotsset{width=7cm,compat=1.3}
\theoremstyle{plain}
\newtheorem{theorem}{Theorem}[section]
\newtheorem{lemma}[theorem]{Lemma}
\theoremstyle{definition}
\theoremstyle{remark}
\newtheorem{remark}{Remark}
\newcommand{\error}{\mathcal{E}}
\newcommand{\parfrac}[2]{\frac{\partial #1}{\partial #2}}
\newcommand{\ti}{{t_{0}}}
\newcommand{\tf}{{t_{F}}}
\newcommand{\Jn}{\mathbf{J}_{n}}
\newcommand{\Tn}{\mathbf{T}_{n}}
\newcommand{\goal}{\overline{\Phi}}
\newcommand{\goalfull}{\Phi}
\newcommand{\bs}[1]{\boldsymbol{#1}}
\newcommand{\vf}[1]{\mathbf{#1}}
\newcommand{\RK}{\mathbb{R}^{K}}
\newcommand{\RP}{\mathbb{R}^{P}}
\newcommand{\R}{\mathbb{R}}
\newcommand{\abs}[1]{\left\lvert #1 \right\rvert}%
\newcommand{\norm}[1]{\left\lVert #1 \right\rVert}%
\DeclareMathOperator{\curl}{\nabla \times}
\begin{document}



\title{Solving Parameter Estimation Problems with Discrete Adjoint Exponential Integrators}

\author{
\name{Ulrich R\"{o}mer\textsuperscript{a}$^{\ast}$\thanks{$^\ast$Corresponding author. Email: roemer@temf.tu-darmstadt.de}, Mahesh Narayanamurthi\textsuperscript{b}
and Adrian Sandu\textsuperscript{b}}
\affil{\textsuperscript{a}Technische Universit\"{a}t Darmstadt, Darmstadt, Germany;
\textsuperscript{b}Virginia Tech, Blacksburg, USA}
}

\thispagestyle{empty}
\setcounter{page}{0}

\makeatletter
\def\Year#1{%
  \def\yy@##1##2##3##4;{##3##4}%
  \expandafter\yy@#1;
}
\makeatother

\begin{Huge}
\begin{center}
Computational Science Laboratory Technical Report CSL-TR-\Year{\the\year}-{\tt 4} \\
\today
\end{center}
\end{Huge}
\vfil
\begin{huge}
\begin{center}
Ulrich R\"omer, Mahesh Narayanamurthi, and Adrian Sandu
\end{center}
\end{huge}

\vfil
\begin{huge}
\begin{it}
\begin{center}
``{\tt Solving Parameter Estimation Problems with Discrete Adjoint Exponential Integrators}''
\end{center}
\end{it}
\end{huge}
\vfil

\begin{large}
\begin{center}
Computational Science Laboratory \\
Computer Science Department \\
Virginia Polytechnic Institute and State University \\
Blacksburg, VA 24060 \\
Phone: (540)-231-2193 \\
Fax: (540)-231-6075 \\ 
Email: \url{maheshnm@vt.edu} \\
Web: \url{http://csl.cs.vt.edu}
\end{center}
\end{large}

\vspace*{1cm}

\begin{tabular}{ccc}
\includegraphics[width=2.5in]{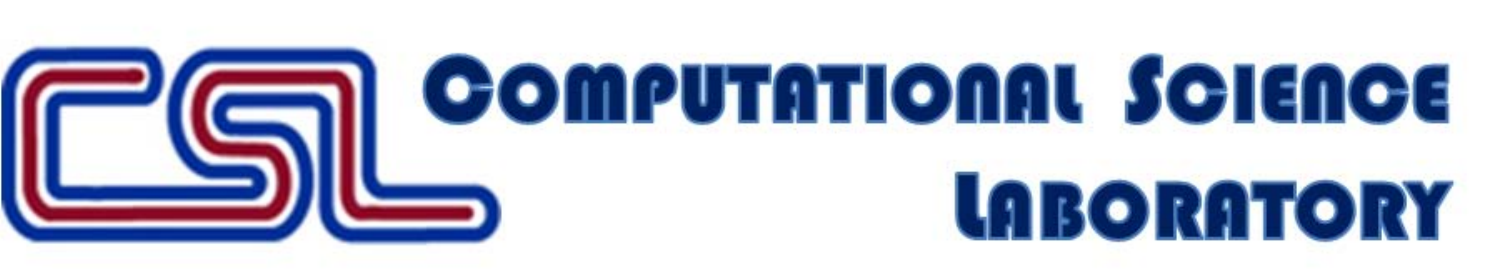}
&\hspace{2.5in}&
\includegraphics[width=2.5in]{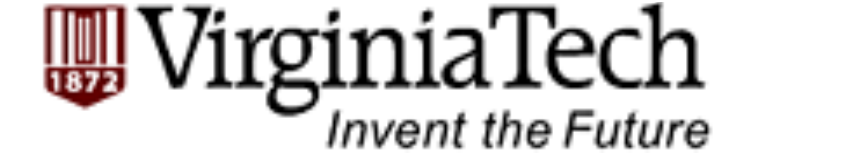} \\
{\bf\large \textit{Compute the Future}} &&\\
\end{tabular}

\newpage

\maketitle

\begin{abstract}
The solution of inverse problems in a variational setting finds best estimates of the model parameters by minimizing a cost function that penalizes the mismatch between model outputs and observations. The gradients required by the numerical optimization process are computed using adjoint models. Exponential integrators are a promising family of time discretizations for evolutionary partial differential equations. In order to allow the use of these discretizations in the context of inverse problems adjoints of exponential integrators are required. 
This work derives the discrete adjoint formulae for a W-type exponential propagation iterative methods of Runge-Kutta type (EPIRK-W). These methods allow arbitrary approximations of the Jacobian while maintaining the overall accuracy of the forward integration. The use of Jacobian approximation matrices that do not depend on the model state avoids the complex calculation of Hessians in the discrete adjoint formulae, and allows efficient adjoint code generation via algorithmic differentiation. We use the discrete EPIRK-W adjoints to solve inverse problems with the Lorenz-96 model and a computational magnetics benchmark test. Numerical results validate our theoretical derivations.
\end{abstract}

\begin{keywords}
exponential integrators; discrete adjoints; algorithmic differentiation; 4D-Var data assimilation 
\end{keywords}

\begin{classcode}
34H05;34K29;34K35
\end{classcode}

\tableofcontents

\section{Introduction}
Differential equations are widely used to model the dynamics of physical processes. Even if the form of the equations perfectly captures the physical effects under consideration, the predictive capability of the mathematical model depends on the availability of accurate parameter values and initial conditions. Data assimilation, roughly defined as the solution of inverse problems with models defined by differential equations, fuses information from model outputs and (noisy) physical measurements to produce better estimates of parameter values. 

In this work we consider inverse problems where the dynamics is modeled by  ordinary differential equations or time dependent partial differential equations, and where a maximum a posteriori (MAP) estimate of the parameter values is computed. Specifically, we consider four-dimensional variational (4D-Var) data assimilation problems where physical measurements are taken at different points in time, and the parameters are updated using all available measurements in the given time window and imposing the system dynamics as strong constraints \cite{kalnay2003atmospheric,Sandu_2003_KPPSEN1,Sandu_2003_KPPSEN2,Sandu_2014_adjoint-parareal,Sandu_2005_adjointAQM,Sandu_2006_ICCS-KPP,Sandu_2006_RKdadj,Sandu_2010_KPP22_TLM_ADJ,Sandu_2010_inverseDADJ,Sandu_2011_advectionCMAQ,Sandu_2014_FATODE}. The time dimension is discretized using a W-method \cite{Steihaug_1979} formulation of a class of exponential integrators that can achieve a high order of accuracy with low stage count and have been shown to be capable of dealing with many stiff problems \cite{Tokman_2006_EPI,Tokman_2011_EPIRK}. To maximize the posterior distribution gradient based optimization techniques are employed. The gradients required in the optimization process are given by the discrete adjoint approach, i.e., are obtained by algorithmic differentiation of the chosen exponential integration method.

Consider the following initial value problem in autonomous form:
\begin{equation}
    \label{eqn:initial_value_problem}
    \frac{d\vf{y}}{dt} = \vf{f}(\vf{y}(t),\bs{\theta}), \qquad  \vf{y}(\ti) = \vf{y}_{\rm ini}(\bs{\theta}), \qquad \ti \leq t \leq \tf,
\end{equation}
where $\vf{y}(t) \in \RK$ is the model state,  $\bs{\theta} \in \RP$ is the vector of model parameters, and the right-hand side function $\vf{f}: \RK \times \RP  \rightarrow \RK$ is assumed to be smooth and continuously differentiable.

A general inverse problem to estimate the uncertain parameters $\bs{\theta}$ is formulated as follows:
\begin{subequations}
\label{eqn:optimal_control_problem}%
\begin{align}
\label{eqn:optimal_control_problem_objective}
	\bs{\theta_\textsc{map}} = \arg\min_{\bs{\theta}}\quad &  \goalfull(\bs{\theta}) =  
	u(\vf{y}(\ti),\bs{\theta}) + \int_{\ti}^{\tf} q(\vf{y}(t),\bs{\theta}) \, dt + w(\vf{y}(\tf),\bs{\theta}) \\
\label{eqn:optimal_control_problem_constraint}
	\text{subject to:}\quad & \vf{y}' = \vf{f}(\vf{y}(t),\bs{\theta}), \qquad \vf{y}(\ti) = \vf{y}_{\rm ini}(\bs{\theta}), \qquad \ti \leq t \leq \tf, \end{align}\cite{Sandu_2011_advectionCMAQ}
\end{subequations}
where the goal function $\goalfull : \RP \rightarrow \R$ is expressed as a sum of an integral involving a nonlinear function of the model state and parameters, $q : \RK \times \RP \rightarrow \R$ and nonlinear functions of the initial and final states and the model parameters, $u,w : \RK \times \RP \rightarrow \R$.

Two different approaches exist to solve problem \eqref{eqn:optimal_control_problem}. In the \textit{first optimize then discretize} approach, the first-order-optimality conditions for the continuous problem \eqref{eqn:optimal_control_problem} are derived first,  and then a discretization scheme is applied to numerically solve the resulting equations \cite{Betts_2010_practical}. This approach affords the freedom to choose different discretization schemes for the forward and adjoint problems. In the \textit{first discretize and then optimize} approach, the original problem \eqref{eqn:optimal_control_problem} is first discretized using an appropriate time stepping method to handle the constraint \eqref{eqn:optimal_control_problem_constraint} and approximating the integral in \eqref{eqn:optimal_control_problem_objective} with a quadrature. The first-order-optimality conditions for the discrete optimization problem are then derived \cite{Betts_2010_practical, betts2005discretize,Sandu_2015_fdvar-aposteriori}.
In this paper we study the latter approach, as it yields the exact gradient of the discrete system \cite{Sandu_2006_RKdadj,Sandu_2007_LMMdadj} and can be partially automated with the help of algorithmic differentiation \cite{griewank2008evaluating, neidinger2010introduction}. Other reasons for not preferring the former approach may include the difficulty in obtaining the necessary conditions for problems that are non-trivial and the need to re-derive these for each new problem \cite{betts2004direct,Betts_2010_practical}, and the additional overhead involved in the derivation and implementation of the corresponding discrete system. 


The paper is structured as follows. In Section \ref{sec:formulation_estimation} the exponential integrator and the discrete parameter estimation problem are introduced. Section \ref{sec:first_order_optimality} derives the first-order-optimality equations and the discrete adjoint of the exponential integrator. Numerical experiments are carried out in Section \ref{sec:applications} for two different test problems. Conclusions are drawn in Section \ref{sec:conclusions}.

\section{Formulation of the Discrete Parameter Estimation Problem}
\label{sec:formulation_estimation}

In this section we describe the discrete parameter estimation problem based on a time discretization that uses a \textit{W}-formulation  \cite{Steihaug_1979} of a class of exponential methods.

\subsection{EPIRK-W Time Discretization Methods}
\label{sec:epirkw}

Exponential Propagation Iterative Methods of Runge-Kutta type (EPIRK) \cite{Tokman_2006_EPI,Tokman_2011_EPIRK} use a very general ansatz among exponential time-stepping schemes that allows the construction of methods with low number of stages and high-order. A general $s$-stage EPIRK method reads:
\begin{subequations}
    \label{eqn:epirk_formulation_original}
    \begin{align}
    \begin{split}
    \vf{Y}_{n,i} &= \vf{y}_{n}\, + a_{i,1}\,  \bs{\psi}_{i,1}(g_{i,1}\,h\Jn)\, h\vf{f}(\vf{y}_{n}) \\
     &+ \displaystyle \sum_{j = 2}^{i} a_{i,j}\, \bs{\psi}_{i,j}(g_{i,j}h\,\Jn)\, h\Delta^{(j-1)}\vf{r}(\vf{y}_{n}), \qquad i = 1 \hdots s-1,
    \end{split}   \label{eqn:epirk_formulation_original_internal_stage}
    \\
    \vf{y}_{n+1} &= \vf{y}_{n}\, + b_{1}\, \bs{\psi}_{s,1}(g_{s,1}\,h\Jn)\, h\vf{f}(\vf{y}_{n}) + \displaystyle\sum_{j = 2}^{s} b_{j}\, \bs{\psi}_{s,j}(g_{s,j}h\,\Jn)\, h\Delta^{(j-1)}\vf{r}(\vf{y}_{n}), 
    \label{eqn:epirk_formulation_original_external_stage}
    \end{align}
\end{subequations}
where $\vf{y}_n$ is the state at the current time $t_n$, $\Jn = \partial \vf{f}/\partial \vf{y}|_{t_n}$ is the Jacobian matrix of the right hand side function \eqref{eqn:initial_value_problem}, $\vf{Y}_{n,1} \dots \vf{Y}_{n,s-1} $ are the intermediate stages, and $\vf{y}_{n+1}$ is the next-step solution. Each $\bs{\psi}_{i,j}(\cdot)$ matrix function is a linear combination of matrix functions $\bs{\varphi}_{k}(\cdot)$:
\begin{equation}
\label{eqn:psi_ij}
\bs{\psi}_{i,j}(\vf{Z}) = \sum_{k = 1}^{s} p_{i,j,k} \, \bs{\varphi}_{k}(\vf{Z}),
\end{equation}
where the scalar analytic function $\varphi_{k}(z)$ is defined as:
\begin{eqnarray}
\label{eqn:phi_k}
\varphi_k(z) = \int_{0}^1 e^{z\,(1 - \theta)} \frac{\theta^{k-1}}{(k-1)!} \, d\theta
= \sum_{i=0}^\infty \frac{z^i}{(i+k)!}, \hspace{1cm} k = 1, 2, \hdots,
\end{eqnarray}
with the matrix function counterparts defined using a series expansion. The functions $\varphi_{k}(z)$ satisfy the following recurrence relation:
\begin{eqnarray}
\label{eqn:phi_k_recursive_formulation_and_phi_k_0}
\varphi_{0}(z) = e^z; \quad
\varphi_{k+1}(z) = \frac{\varphi_k(z) - 1/k!}{z}, ~~k = 1, 2, \hdots; \quad \varphi_k(0) = \frac{1}{k!}.
\end{eqnarray}
The major computational cost of EPIRK methods is the evaluation of matrix-function-times-vector products of the form $\bs{\psi}_{i,j}\big(h\, \gamma\, \Jn)\cdot \vf{v}$, which are linear combinations of products $\bs{\varphi}_{k}\big(h\, \gamma\, \Jn)\cdot\vf{v}$. For large systems Krylov-subspace methods are the preferred choice for the evaluation of these products \cite{Hochbruck_1997_exp}. It has been shown that for certain kinds of problems, Krylov-based approximation of matrix exponential products converge faster than the corresponding Krylov-based linear system solves \cite{Hochbruck_1997_exp}. Here we use a Krylov-subspace based algorithm to compute the matrix exponential-like vector products as explained in Section \ref{sec:ad}.

In \eqref{eqn:epirk_formulation_original} the $\bs{\psi}$ matrix functions multiply the right-hand side function evaluated at the current state, $\vf{f}(\vf{y}_n)$, and the forward difference vector  $\Delta^{(j-1)}\vf{r}(\vf{y}_n)$. The forward difference is defined recursively with the help of the remainder function $\vf{r}(\vf{y})$ as follows:
\begin{equation}
\begin{split}
\vf{r}(\vf{y}) &:= \vf{f}(\vf{y}) - \vf{f}(\vf{y}_n) - \Jn (\vf{y} - \vf{y}_n), \\
\Delta^{(0)}\vf{r}(\vf{y}) &:= \vf{r}(\vf{y}),\qquad
\vf{Y}_{n,0} := \vf{y}_n, \\
\Delta^{(j)}\vf{r}(\vf{Y}_{n,i}) &= \Delta^{(j-1)}\vf{r}(\vf{Y}_{n,i+1}) - \Delta^{(j-1)}\vf{r}(\vf{Y}_{n,i}),\quad j\ge 1, \\
\Delta^{(j)}\vf{r}(\vf{y}_{n}) &= \Delta^{(j)}\vf{r}(\vf{Y}_{n,0}) = \Delta^{(j-1)}\vf{r}(\vf{Y}_{n,1}) - \Delta^{(j-1)}\vf{r}(\vf{Y}_{n,0}),\quad j\ge 1.
\end{split}
\end{equation}
Although classical EPIRK methods perform well on stiff problems \cite{Tokman_2011_EPIRK}, using them in the \textit{first discretize and then optimize} approach will require the computation of the Hessian of the right-hand side function. In order to circumvent this, we resort to a W-method formulation of the EPIRK method \cite{Narayanamurthi_2017_EPIRKK}. W-methods first introduced in \cite{Steihaug_1979} for Rosenbrock-type methods admit the use of arbitrary approximations in place of the exact Jacobian, while maintaining the derived order of convergence. 

In \cite{Narayanamurthi_2017_EPIRKK} the authors have recently developed W-type EPIRK methods. An $s$-stage EPIRK-W method reads:
\begin{subequations}
    \label{eqn:epirk_w_formulation_original}
    \begin{align}
    \begin{split}
    \vf{Y}_{n,i} &= \vf{y}_{n}\, + a_{i,1}\,  \bs{\psi}_{i,1}(g_{i,1}\,h\Tn)\, h\vf{f}(\vf{y}_{n}) \\
    &+ \displaystyle \sum_{j = 2}^{i} a_{i,j}\, \bs{\psi}_{i,j}(g_{i,j}h\,\Tn)\, h\Delta^{(j-1)}\vf{r}(\vf{y}_{n}), \qquad i = 1 \hdots s-1,
    \end{split}\label{eqn:epirk_w_formulation_original_internal_stage}\\
    \vf{y}_{n+1} &= \vf{y}_{n}\, + b_{1}\, \bs{\psi}_{s,1}(g_{s,1}\,h\Tn)\, h\vf{f}(\vf{y}_{n}) + \displaystyle\sum_{j = 2}^{s} b_{j}\, \bs{\psi}_{s,j}(g_{s,j}h\,\Tn)\, h\Delta^{(j-1)}\vf{r}(\vf{y}_{n}),  \label{eqn:epirk_w_formulation_original_external_stage}
    \end{align}
\end{subequations}
where the exact Jacobian $\Jn$ in \eqref{eqn:epirk_formulation_original} has been replaced with an arbitrary approximation $\Tn$. The exact implication of this will become clear from the discussion that follows. To simplify notation, we define:
\[
\mathbf{A}_{n,i,j} := a_{i,j}\,  \bs{\psi}_{i,j}(g_{i,j}\,h\,\Tn), \qquad 
\mathbf{B}_{n,j} := b_{j}\, \bs{\psi}_{s,j}(g_{s,j}\,h\,\Tn). 
\]
With this notational change, one step of an EPIRK-W method reads:
\begin{subequations}
\label{eqn:epirkw_final_formulation}
\begin{align}
\vf{Y}_{n,i} &= \vf{y}_{n}\, + \mathbf{A}_{n,i,1}\, h\vf{f}(\vf{y}_{n}) + \displaystyle \sum_{j = 2}^{i} \mathbf{A}_{n,i,j}\, h\Delta^{(j-1)}\vf{r}(\vf{y}_{n}), \quad i = 1, \hdots, s - 1. \label{eqn:epirkw_final_formulation_internal_stages}\\
\vf{y}_{n+1} &= \vf{y}_{n}\, + \mathbf{B}_{n,1}\, h\vf{f}(\vf{y}_{n}) + \displaystyle\sum_{j = 2}^{s} \mathbf{B}_{n,j}\, h\Delta^{(j-1)}\vf{r}(\vf{y}_{n}). \label{eqn:epirkw_final_formulation_external_stage} 
\end{align}
\end{subequations}
%

\subsection{The Discrete Formulation of the Parameter Estimation Problem}
\label{sec:discrete-opt}

In the \textit{first discretize and then optimize} approach to solve \eqref{eqn:optimal_control_problem} we partition the time interval into $N$ subintervals with nodes $t_0, t_1, \hdots, t_N = \tf$. The constraint \texttt{ODE} \eqref{eqn:optimal_control_problem_constraint} is replaced by its EPIRK-W discretization \eqref{eqn:epirkw_final_formulation} on each subinterval $[t_n,t_{n+1}]$. The integral term of the goal function \eqref{eqn:optimal_control_problem_objective} is also discretized by a quadrature rule evaluated on these time nodes. The discrete version of the parameter estimation problem \eqref{eqn:optimal_control_problem} reads:
\begin{subequations}
\label{eqn:optimal_control_problem_using_epirkw}
\begin{align}
	\bs{\theta_\textsc{map}} =\arg\min_{\bs{\theta}}\quad &  \widetilde{\goalfull}(\bs{\theta}) = \sum_{k = 0}^{N} q_k(\vf{y}_{k},\bs{\theta}) \label{eqn:optimal_control_problem_using_epirkw_objective}\\
	\text{subject to:} \quad & 
	\vf{Y}_{n,i} = \vf{y}_{n}\, + \mathbf{A}_{n,i,1}\, h\,\vf{f}(\vf{y}_{n},\bs{\theta}) \nonumber  \\
	&\qquad + \displaystyle \sum_{j = 2}^{i} \mathbf{A}_{n,i,j} h\Delta^{(j-1)}\vf{r}(\vf{y}_{n},\bs{\theta}), \quad i = 1, \hdots, s - 1.  \label{eqn:epirkw_formulation_internal_stages}\\
	& \vf{y}_{n+1} = \vf{y}_{n}\, + \mathbf{B}_{n,1}\, h\,\vf{f}(\vf{y}_{n},\bs{\theta}) \nonumber \\
	& \qquad + \displaystyle\sum_{j = 2}^{s} \mathbf{B}_{n,j}\, h\Delta^{(j-1)}\vf{r}(\vf{y}_{n},\bs{\theta}), \quad  0\leq n \leq N-1, \label{eqn:epirkw_formulation_final_stages}  \\
	&\vf{y}_0 = \vf{y}_{\rm ini}(\bs{\theta}).\label{eqn:epirkw_formulation_initial} 
\end{align}
\end{subequations}
The definition  of the discrete cost function \eqref{eqn:optimal_control_problem_using_epirkw_objective} includes the initial term $u(\vf{y}_{\rm ini},\bs{\theta})$ in $q_0(\vf{y}_{\rm ini},\bs{\theta})$, and the final term 
$w(\vf{y}_{N},\bs{\theta})$ in $q_N(\vf{y}_{N},\bs{\theta})$. 

%

We proceed by describing a special case of the optimization problem \eqref{eqn:optimal_control_problem_using_epirkw}, that will also be used in the numerical examples of Section \ref{sec:applications}.

\subsection{4D-Var Data Assimilation}
\label{sec:fdvar}
The four-dimensional variational data assimilation problem is a special case of \eqref{eqn:optimal_control_problem_using_epirkw}.
Adopting the notation of \cite{singh2013practical}, let $\bs{\theta_\textsc{true}}$ denote the true but unknown parameter vector and $\bs{\theta}_\textsc{b}$ its prior, which is also referred to as background. We assume that $\error_\textsc{b} = \bs{\theta}_\textsc{b} - \bs{\theta_\textsc{true}}$ is normally distributed with zero mean and covariance matrix $\mathbf{B}$. Let measurements $\vf{z}_i$ be given at time points $t_{j_i},\, i=1,\ldots,N_\textsc{obs}$, which are assumed to be a subset of the nodes of the time interval, for simplicity. Let $\mathcal{H}$ denote an observation operator that maps a state $\vf{y}_{j_i}$, at time $t_{j_i}$, to the space of observations. The measurement error $\error^\mathrm{obs}_{i} = \mathcal{H}(\vf{y}_{\textsc{true},j_i}) - \vf{z}_i$, where $\vf{y}_{\textsc{true},j_i}$ represents the state associated to $\bs{\theta_\textsc{true}}$ at time $t_{j_i}$.We assume a normal distribution for $\error^\mathrm{obs}_{i}$, with mean zero and covariance matrix $\mathbf{R}_i$. Assuming no model errors, the 4D-Var cost function and gradient are defined as follows:
\begin{align}
\widetilde{\goalfull}(\bs{\theta}) &= \frac{1}{2} \left (\bs{\theta} - \bs{\theta}_\textsc{b} \right)^T \mathbf{B}^{-1} \left (\bs{\theta} - \bs{\theta}_\textsc{b} \right) + \frac{1}{2} \sum_{i=1}^{N_\textsc{obs}} \left(\mathcal{H}(\vf{y}_{j_i}) - \vf{z}_{i} \right)^T \mathbf{R}_{i}^{-1} \left(\mathcal{H}(\vf{y}_{j_i}) - \vf{z}_{i} \right),\label{eqn:4dvar_cost_function}\\
\nabla_{\bs{\theta}} \widetilde{\goalfull}(\bs{\theta}) &=	\mathbf{B}^{-1} \left(\bs{\theta} - \bs{\theta}^\text{B} \right) + \sum_{i=1}^{N_\textsc{obs}} \left(\frac{d \vf{y}_{j_i}}{d \bs{\theta}}\right)^T \left(\frac{d \mathcal{H}}{d\vf{y}}(\vf{y}_{j_i}) \right)^T \mathbf{R}_{i}^{-1} \left(\mathcal{H}(\vf{y}_{j_i}) - \vf{z}_{i} \right).\label{eqn:4dvar_gradient}
\end{align}
The value $\bs{\theta_\textsc{map}}$ minimizing \eqref{eqn:optimal_control_problem_using_epirkw} with the goal function given by \eqref{eqn:4dvar_cost_function}, represents the maximum-likelihood estimate of $\bs{\theta_\textsc{true}}$. In a data assimilation context, $\bs{\theta_\textsc{map}}$ is also referred to as analysis. 

\begin{remark}
In parameter estimation problem \eqref{eqn:optimal_control_problem}, the goal function and \texttt{ODE} constraint depend on a vector of unknown parameters $\bs{\theta} \in \mathbb{R}^P$ that need to be inferred from measurements. In the general case, computation of gradient of the cost function, as shown in \eqref{eqn:gradient_goal_function}, involves complicated derivatives with respect to $\bs{\theta}$. This can be circumvented by converting this into an initial state estimation problem. To this end, the \texttt{ODE} system can be extended with the addition of parameters $\bs{\tau}(t) = \bs{\theta}$ as formal variables to the state vector as follows:
\begin{equation}
{\tilde{\vf{y}}}' = 
\begin{bmatrix}
\vf{y} \\
\bs{\tau}
\end{bmatrix}'
= 
\begin{bmatrix}
\vf{f}(\vf{y},\bs{\tau}) \\
\bs{0}
\end{bmatrix}
= \tilde{\vf{f}}(\tilde{\vf{y}})
,\quad 
\tilde{\vf{y}}(t_0)=
\begin{bmatrix}
\vf{y}(t_0) \\
\bs{\tau}(t_0)
\end{bmatrix}
= 
\begin{bmatrix}
\vf{y}_{\rm ini} \\
\bs{\theta}
\end{bmatrix}
= \tilde{\vf{y}}_{\rm ini}.
\end{equation}
\end{remark}

\section{First Order Optimality Conditions and the Discrete EPIRK-W Adjoint}
\label{sec:first_order_optimality}
This section is devoted to the first-order-optimality system associated with the discrete inverse problem \eqref{eqn:optimal_control_problem_using_epirkw}, which involves discrete adjoints for the EPIRK-W method. Various authors have addressed discrete adjoints in optimal control, constrained by ordinary differential equations. In \cite{hager2000runge} discrete adjoints for Runge-Kutta methods were derived together with control specific order conditions. A similar approach was taken in \cite{lang2013w} for Runge-Kutta W-methods. In both works, the discrete adjoint method could be reformulated again as a Runge-Kutta method and this was exploited in the subsequent error analysis. A general discussion on the use of discrete time integration adjoints in the solution of inverse problems is presented in \cite{Sandu_2010_inverseDADJ}. Discrete adjoints of Rosenbrock methods are formulated in \cite{Sandu_2000_RosAdjoint_book,Sandu_2000_RosAdjoint}. Theoretical properties of general discrete Runge-Kutta adjoints are proved in \cite{Sandu_2006_RKdadj}, and theoretical properties of discrete adjoints of linear multistep methods in \cite{Sandu_2007_LMMdadj}. Discrete adjoints of variable-step integrators are studied in \cite{Sandu_2009_dadjTimestep}. Efficient implementations of Runge-Kutta adjoints are provided by software developed by the authors \cite{Sandu_2006_ICCS-KPP,Sandu_2009_DENSERKS,Sandu_2009_HPC-GeosChem,Sandu_2010_KPP22_TLM_ADJ,Sandu_2014_FATODE,Sandu_2015_MATLODE}.

This work is concerned  with deriving the discrete adjoints of EPIRK-W methods. As we will see, the discrete adjoint equations do not readily show the structure of an exponential method.  Discrete adjoints for another closely related class of exponential methods can be found in \cite{rothauge2016discrete}.

\subsection{The Discrete Lagrangian}
\label{sec:dlag}

The first-order-optimality system is obtained by seeking a stationary point of the Lagrangian of the discrete optimal control problem \eqref{eqn:optimal_control_problem_using_epirkw}. We introduce the compact notation:
\begin{equation}
\vf{Y}_{n} = \left(\vf{Y}_{n,1}^T, \ldots, \vf{Y}_{n,s-1}^T  \right )^T, \quad \hat{\vf{Y}} = \left(\vf{Y}_{0}^T, \ldots, \vf{Y}_{N-1}^T  \right )^T, \quad \hat{\vf{y}} = \left(\vf{y}_{\rm ini}^T, \ldots, \vf{y}_{N}^T  \right )^T.
\end{equation}
Let $\bs{\Lambda}_{n,i}$, $\bs{\lambda}_n$ denote the Lagrange multipliers associated to  \eqref{eqn:epirkw_formulation_internal_stages} and \eqref{eqn:epirkw_formulation_final_stages}, respectively. The Lagrangian of the discrete optimal control problem \eqref{eqn:optimal_control_problem_using_epirkw} reads:
%
\[
\mathcal{L}(\hat{\vf{y}},\hat{\vf{Y}},\hat{\bs{\lambda}},\hat{\bs{\Lambda}},\bs{\theta}) =  \sum_{k=0}^N q_k\big(\vf{y}_k,\bs{\theta}\big) - \mathcal{L}_{\textnormal{ext}}\big(\hat{\vf{y}},\hat{\vf{Y}},\hat{\bs{\lambda}},\bs{\theta}\big) - \mathcal{L}_{\rm int}\big(\hat{\vf{y}},\hat{\vf{Y}},\hat{\bs{\Lambda}},\bs{\theta}\big),
\]
where:
\begin{subequations}
\begin{align}
\label{eqn:external_Lagrangian}
\mathcal{L}_{\textnormal{ext}} &= \bs{\lambda}_0^T\big(\vf{y}_0 - \vf{y}_{\rm ini}(\bs{\theta})\big) \notag \\
&+ \sum_{k = 0}^{N - 1} \bs{\lambda}_{k+1}^{T} \left(\vf{y}_{k+1} - \vf{y}_{k}\, - \mathbf{B}_{k,1}\, h \vf{f}(\vf{y}_{k},\bs{\theta}) - \displaystyle\sum_{j = 2}^{s} \mathbf{B}_{k,j}\, h\Delta^{(j-1)}\vf{r}(\vf{y}_{k},\bs{\theta})\right),\\
\label{eqn:internal_Lagrangian}
\mathcal{L}_{\rm int} &= \sum_{k = 0}^{N - 1}  \sum_{m = 1}^{s - 1} \bs{\Lambda}_{k,m}^{T} \left(\vf{Y}_{k,m} - \vf{y}_{k}\, - \mathbf{A}_{k,m,1}\, h \vf{f}(\vf{y}_{k},\bs{\theta}) - \displaystyle \sum_{j = 2}^{m} \mathbf{A}_{k,m,j}\, h\Delta^{(j-1)}\vf{r}(\vf{y}_{k},\bs{\theta})\right).
\end{align}
\end{subequations}
In the above equations, the forward difference operator $\Delta^{(j-1)}\vf{r}(\vf{y}_{k},\bs{\theta})$ can be written in a closed form expression as
\begin{equation}
\Delta^{(j-1)}\vf{r}(\vf{y}_{k},\bs{\theta}) = \sum_{\ell = 0}^{j -1} C_{\ell,j}\, \vf{r}(\vf{Y}_{k, j-1-\ell},\bs{\theta}), 
\quad C_{\ell,j}= (-1)^{\ell}\, \binom{j-1}{\ell},
\nonumber
\end{equation}
%
where $\vf{r}(\vf{Y}_{k,j-1-\ell},\bs{\theta})$ is the remainder term of the first-order Taylor expansion of $\vf{f}$ around $\vf{y}_k$, evaluated at $\vf{Y}_{k,j-1-\ell}$ with the exact Jacobian $\vf{J}_{k}$ replaced by the approximation $\vf{T}_{k}$:
\begin{eqnarray}
\vf{r}(\vf{Y}_{k,j-1-\ell},\bs{\theta}) = \vf{f}(\vf{Y}_{k,j-1-\ell},\bs{\theta}) - \vf{f}(\vf{y}_k,\bs{\theta}) - \mathbf{T}_k \cdot (\vf{Y}_{k,j-1-\ell} - \vf{y}_k) \label{eqn:remainder_approximation}.
\end{eqnarray}
It should be pointed out that in the above expressions $\vf{Y}_{k, 0} = \vf{y}_k$ was introduced for brevity in later expressions. 

Substituting \eqref{eqn:remainder_approximation} into \eqref{eqn:external_Lagrangian} and \eqref{eqn:internal_Lagrangian}, respectively, we obtain:
\begin{subequations}
\begin{align}
\mathcal{L}_{\textnormal{ext}}& = \bs{\lambda}_0^T\big(\vf{y}_0 - \vf{y}_{\rm ini}(\bs{\theta})\big) + \sum_{k = 0}^{N - 1} \bs{\lambda}_{k+1}^{T} \Bigg(\vf{y}_{k+1} - \vf{y}_{k}\, - \mathbf{B}_{k,1}\, h \vf{f}(\vf{y}_{k},\bs{\theta}) \nonumber\\
&- \displaystyle\sum_{j = 2}^{s} \mathbf{B}_{k,j}\, h\bigg(\sum_{\ell = 0}^{j -1} C_{\ell,j} \big(\vf{f}(\vf{Y}_{k,j-1-\ell},\bs{\theta}) - \vf{f}(\vf{y}_k,\bs{\theta}) - \mathbf{T}_k (\vf{Y}_{k,j-1-\ell} - \vf{y}_k)\big)\bigg)\Bigg),\nonumber\\
\mathcal{L}_{\rm int}&= \sum_{k = 0}^{N - 1}  \sum_{m = 1}^{s - 1} \bs{\Lambda}_{k,m}^{T} \Bigg(\vf{Y}_{k,m} - \vf{y}_{k}\, - \mathbf{A}_{k,m,1}\, h\vf{f}(\vf{y}_{k},\bs{\theta}) \nonumber\\
& - \displaystyle \sum_{j = 2}^{m} \mathbf{A}_{k,m,j}\, h\bigg(\sum_{\ell = 0}^{j -1} C_{\ell,j} \big(\vf{f}(\vf{Y}_{k,j-1-\ell},\bs{\theta}) - \vf{f}(\vf{y}_k,\bs{\theta}) - \mathbf{T}_k (\vf{Y}_{k,j-1-\ell} - \vf{y}_k)\big)\bigg)\Bigg).\nonumber
\end{align}
\end{subequations}
%

\subsection{Derivation of the Discrete Adjoint Equation}
\label{sec:discrete_adjoint}
The adjoint equations \eqref{eqn:discrete_adjoint_equations} are obtained by setting $\partial \mathcal{L}/\partial \vf{y}_n = 0$ and $\partial \mathcal{L}/\partial \vf{Y}_{n,i} = 0$  for all $n>0$. For a better readability, we compute various derivatives separately. We thereby adopt the convention that all terms for which subscripts turn out to be negative are automatically dropped or set to zero.

\subsubsection{Computing $\partial \mathcal{L}_{\textnormal{ext}}/\partial \vf{y}_n$}
\label{sec:dL-a}

For $0<n < N$, only the terms $k=n-1$ and $k=n$ in the sum $\sum_{k=0}^{N-1}$ give non-zero contributions, as follows:
\begin{align*}
&\frac{\partial \mathcal{L}_{\textnormal{ext}}}{\partial \vf{y}_n}  =  \bs{\lambda}_{n}^{T}\cdot \frac{\partial}{\partial \vf{y}_n} \Bigg\{ \vf{y}_{n} - \vf{y}_{n-1} - \mathbf{B}_{n-1,1}\, h\vf{f}(\vf{y}_{n-1},\bs{\theta})   \nonumber\\
& \quad - \sum_{j = 2}^{s} \mathbf{B}_{n-1,j}\; h\,\sum_{\ell = 0}^{j -1} C_{\ell,j} \bigg[ \vf{f}(\vf{Y}_{n-1,j-1-\ell},\bs{\theta}) - \vf{f}(\vf{y}_{n-1},\bs{\theta}) - \mathbf{T}_{n-1} (\vf{Y}_{n-1,j-1-\ell} - \vf{y}_{n-1})\bigg]\Bigg\}\nonumber\\
&  \quad +   \bs{\lambda}_{n+1}^{T}\cdot \frac{\partial}{\partial \vf{y}_n}\Bigg\{ \vf{y}_{n+1} - \vf{y}_{n} - \mathbf{B}_{n,1}h\vf{f}(\vf{y}_{n},\bs{\theta}) \nonumber \\
& \quad - \sum_{j = 2}^{s} \mathbf{B}_{n,j} \; h \sum_{\ell = 0}^{j -1} C_{\ell,j} \bigg[ \vf{f}(\vf{Y}_{n,j-1-\ell},\bs{\theta}) - \vf{f}(\vf{y}_{n},\bs{\theta}) - \mathbf{T}_{n} (\vf{Y}_{n,j-1-\ell} - \vf{y}_{n})\bigg]\Bigg\}\nonumber\\
&= \bs{\lambda}_{n}^{T}\big\{\mathbf{I} - 0\, - 0 - 0\big\} + \bs{\lambda}_{n+1}^{T}\cdot \Bigg\{ 0 - \mathbf{I}\, - \mathbf{B}_{n,1}\, h\mathbf{J}(\vf{y}_{n},\bs{\theta}) \nonumber\\
& \quad -\sum_{j = 2}^{s} \mathbf{B}_{n,j}\, h\sum_{\ell = 0}^{j -2} C_{\ell,j} \big[0 - \mathbf{J}(\vf{y}_{n},\bs{\theta}) - \mathbf{T}_{n} (0 - \mathbf{I})\big] \Bigg\}\nonumber\\
&=  \bs{\lambda}_{n}^{T} + \bs{\lambda}_{n+1}^{T}\cdot \bigg\{- \mathbf{I}\, - \mathbf{B}_{n,1}\, h\mathbf{J}(\vf{y}_{n},\bs{\theta})  -\sum_{j = 2}^{s} \mathbf{B}_{n,j}\, h\sum_{\ell = 0}^{j -2} C_{\ell,j} \big[\mathbf{T}_{n} - \mathbf{J}(\vf{y}_{n},\bs{\theta})\big]\bigg\}, \nonumber
\end{align*}
where $\mathbf{I}$ and $\mathbf{J}_n$ are the $\mathbb{R}^K \times \mathbb{R}^K$ identity matrix and the Jacobian matrix evaluated at $\vf{y}_n$, respectively. Also the definition $\vf{Y}_{k,0}=\vf{y}_k$ was used in the derivation above. 

For $n = N$, only the term with index $k=N-1$ remains and we obtain:
\begin{align*}
&\frac{\partial \mathcal{L}_{\textnormal{ext}}}{\partial \vf{y}_{N}} = \bs{\lambda}_{N}^{T}\cdot \frac{\partial}{\partial \vf{y}_{N}} \bigg\{ \vf{y}_{N} - \vf{y}_{N-1}\, - \mathbf{B}_{N-1,1}\, h\vf{f}(\vf{y}_{N-1},\bs{\theta}) \nonumber  \\
&\quad - \sum_{j = 2}^{s} \mathbf{B}_{N-1,j}\,  h\sum_{\ell = 0}^{j -1} C_{\ell,j} \big[\vf{f}(\vf{Y}_{N-1,j-1-\ell},\bs{\theta}) - \vf{f}(\vf{y}_{N-1},\bs{\theta})  - \mathbf{T}_{N-1} (\vf{Y}_{N-1,j-1-\ell} - \vf{y}_{N-1})\big]\bigg\} \\
& = \bs{\lambda}_{N}^{T}.
\end{align*}
For $n=0$ we compute:
\begin{align*}
\frac{\partial \mathcal{L}_{\textnormal{ext}}}{\partial \vf{y}_0} 
&= \bs{\lambda}_0^T +  \bs{\lambda}_{1}^{T}\cdot \frac{\partial}{\partial \vf{y}_0} \Bigg\{ \vf{y}_{1} - \vf{y}_0\, - \mathbf{B}_{0,1}\, h \vf{f}(\vf{y}_0,\bs{\theta})   \\
& \ - \sum_{j = 2}^{s} \mathbf{B}_{0,j}\,  h \sum_{\ell = 0}^{j -1} C_{\ell,j} \big[\vf{f}(\vf{Y}_{0,j-1-\ell},\bs{\theta}) - \vf{f}(\vf{y}_0,\bs{\theta}) - \mathbf{T}_{0} (\vf{Y}_{0,j-1-\ell} - \vf{y}_0)\big]\Bigg\} \\
&= \bs{\lambda}_0^T + \bs{\lambda}_{1}^{T}\cdot \bigg\{- \mathbf{I} - \mathbf{B}_{0,1}\, h \mathbf{J}(\vf{y}_{0},\bs{\theta}) - \sum_{j = 2}^{s} \mathbf{B}_{0,j}\,  h \sum_{\ell = 0}^{j - 2} C_{\ell,j} \big(\mathbf{T}_{0}- \mathbf{J}(\vf{y}_{0},\bs{\theta}) \big)\bigg\}. 
\end{align*}

\subsubsection{Computing $\partial \mathcal{L}_{\textnormal{ext}}/\partial \vf{Y}_{n,i}$}
\label{sec:dL-b}

The same nonzero terms in the sum remain when we differentiate with respect to $\vf{Y}_{n,i}$ as when we differentiate with respect to $\vf{y}_n$. For $0 \leq n < N$ we obtain:
\begin{align*}
\frac{\partial \mathcal{L}_{\textnormal{ext}}}{\partial \vf{Y}_{n,i}} &= -\bs{\lambda}_{n+1}^{T} \cdot \Bigg\{ \sum_{j = 2}^{s} \mathbf{B}_{n,j} \nonumber\\
& \cdot \frac{\partial}{\partial \vf{Y}_{n,i}}\bigg(h \sum_{\ell = 0}^{j -1} C_{\ell,j} \big[\vf{f}(\vf{Y}_{n,j-1-\ell},\bs{\theta}) - \vf{f}(\vf{y}_{n},\bs{\theta}) - \mathbf{T}_{n} (\vf{Y}_{n,j-1-\ell} - \vf{y}_{n})\big]\bigg)\Bigg\} \\
&= -\bs{\lambda}_{n+1}^{T} \cdot \bigg\{\sum_{j = 2}^{s} \mathbf{B}_{n,j}\, h\,C_{j-i-1,j}\big[\mathbf{J}(\vf{Y}_{n,i},\bs{\theta}) - \mathbf{T}_{n} \big]\bigg\}. \nonumber
\end{align*}
For $n=N$ the derivative is zero.

\subsubsection{Computing $\partial \mathcal{L}_{\rm int}/\partial \vf{y}_n$}
\label{sec:dL-c}

As parts of the derivation were already given in Sec. \ref{sec:dL-a} we omit intermediate steps. For $0 \leq n < N$ we obtain:
\begin{align*}
\frac{\partial \mathcal{L}_{\rm int}}{\partial \vf{y}_n} &=\sum_{m = 1}^{s - 1} \bs{\Lambda}_{n,m}^{T} \cdot  \frac{\partial}{\partial \vf{y}_n} \Bigg\{\vf{Y}_{n,m} - \vf{y}_{n}\, - \mathbf{A}_{n,m,1}\, \vf{f}(\vf{y}_{n},\bs{\theta}) \nonumber \\
& \quad -\sum_{j = 2}^{m} \mathbf{A}_{n,m,j}\, h\sum_{\ell = 0}^{j -1}C_{\ell,j} \bigg[\vf{f}(\vf{Y}_{n,j-1-\ell},\bs{\theta}) - \vf{f}(\vf{y}_n,\bs{\theta}) - \mathbf{T}_n (\vf{Y}_{n,j-1-\ell} - \vf{y}_n)\bigg]\Bigg\} \\
\,
&= \sum_{m = 1}^{s - 1} \bs{\Lambda}_{n,m}^{T} \cdot \Bigg\{- \mathbf{I}\, - \mathbf{A}_{n,m,1}\, h \mathbf{J}(\vf{y}_{n},\bs{\theta})  -\sum_{j = 2}^{m} \mathbf{A}_{n,m,j}\, h\sum_{\ell = 0}^{j -1} C_{\ell,j}\big[- \mathbf{J}(\vf{y}_n,\bs{\theta}) + \mathbf{T}_n\big]\Bigg\}, 
\end{align*}
whereas there is no contribution for $n=N$.

\subsubsection{Computing $\partial \mathcal{L}_{\rm int}/\partial \vf{Y}_{n,i}$}
\label{sec:dL-d}

Again, omitting several intermediate results due to the similarity to Sec. \ref{sec:dL-b} we obtain for $0\leq n<N$:
\begin{align*}
\frac{\partial \mathcal{L}_{\rm int}}{\partial \vf{Y}_{n,i}}& = \sum_{m = 1}^{s - 1} \bs{\Lambda}_{n,m}^{T} \cdot  \frac{\partial}{\partial \vf{Y}_{n,i}} \Bigg\{ \vf{Y}_{n,m} - \vf{y}_{n}\, - \mathbf{A}_{n,m,1}\, h \vf{f}(\vf{y}_{n},\bs{\theta}) \nonumber  \\
& \quad - \sum_{j = 2}^{m} \mathbf{A}_{n,m,j}\,  h\sum_{\ell = 0}^{j -1} C_{\ell,j} \Big[\vf{f}(\vf{Y}_{n,j-1-\ell},\bs{\theta}) - \vf{f}(\vf{y}_n,\bs{\theta}) - \mathbf{T}_n (\vf{Y}_{n,j-1-\ell} - \vf{y}_n)\Big]\Bigg\} \\
&= \bs{\Lambda}_{n,i}^{T} - \sum_{m = 1}^{s - 1} \bs{\Lambda}_{n,m}^{T} \cdot \bigg\{ \sum_{j = 2}^{m} \mathbf{A}_{n,m,j}\, h C_{j-i-1,j} \big[\mathbf{J}(\vf{Y}_{n,i},\bs{\theta}) - \mathbf{T}_n\big]\bigg\}.
\end{align*}

Finally, we observe that at the stationary point $(\hat{\vf{y}}^*,\hat{\vf{Y}}^*,\hat{\bs{\lambda}}^*,\hat{\bs{\Lambda}}^*)$ of the Lagrangian, both $\mathcal{L}_\textnormal{ext}$ and $\mathcal{L}_\textrm{int}$ vanish and hence
\begin{equation}
\goalfull(\bs{\theta}) = \mathcal{L}(\hat{\vf{y}}^*,\hat{\vf{Y}}^*,\hat{\bs{\lambda}}^*,\hat{\bs{\Lambda}}^*,\bs{\theta}).
\end{equation}
Differentiating this expression with respect to the parameter $\bs{\theta}$ yields
\begin{align} 
\frac{\partial \goalfull}{\partial \bs{\theta}} &=  \sum_{k=1}^N \frac{\partial q_k(\vf{y}_k,\bs{\theta})}{\partial \bs{\theta}} - \frac{\partial}{\partial \bs{\theta}} \mathcal{L}_{\textnormal{ext}}(\hat{\vf{y}}^*,\hat{\vf{Y}}^*,\hat{\bs{\lambda}}^*,\bs{\theta}) - \frac{\partial}{\partial \bs{\theta}} \mathcal{L}_{\rm int}(\hat{\vf{y}}^*,\hat{\vf{Y}}^*,\hat{\bs{\Lambda}}^*,\bs{\theta})
\end{align}
from which we obtain the expression for the gradient, given in (C) below, by inserting the respective expressions for $\mathcal{L}_{\textnormal{ext}}$ and $\mathcal{L}_{\rm int}$.

\subsection{First Order Optimality System}
\label{sec:fos}
The first-order-optimality conditions of \eqref{eqn:optimal_control_problem_using_epirkw} are obtained from the results of Sec. \ref{sec:dL-a}--\ref{sec:dL-d} by enforcing the derivatives of $\mathcal{L}$ to zero:
\begin{enumerate}[label=(\Alph*)]
\item The original EPIRK-W method, i.e., \eqref{eqn:epirkw_formulation_internal_stages}, \eqref{eqn:epirkw_formulation_final_stages} and \eqref{eqn:epirkw_formulation_initial}, is obtained by equating $\partial \mathcal{L}/\partial \bs{\lambda}_{n}=0$ and $\partial \mathcal{L}/\partial \bs{\Lambda}_{n,i}=0$, respectively.
\item  The discrete adjoint system is obtained as follows.
\begin{subequations}
\label{eqn:discrete_adjoint_equations}
Enforcing $\partial \mathcal{L}/\partial \vf{y}_{n} = 0$ for $n=N$ and $n=0,\ldots,N-1$ leads to:
\begin{equation}
\label{eqn:discrete_adjoint_equation_final_stages}
\begin{split}
\bs{\lambda}_N^T & = \frac{\partial q_N(\vf{y}_N,\bs{\theta})}{\partial \vf{y}_N}, \\
\bs{\lambda}_{n}^T &= \frac{\partial q_n(\vf{y}_n,\bs{\theta})}{\partial \vf{y}_n} + \sum_{m = 1}^{s - 1} \bs{\Lambda}_{n,m}^{T} \cdot  \\
&\cdot \bigg(\mathbf{I}\, + \mathbf{A}_{n,m,1}\, h \mathbf{J}(\vf{y}_{n},\bs{\theta})+\sum_{j = 2}^{m} \mathbf{A}_{n,m,j} \, h \sum_{\ell = 0}^{j -1} C_{\ell,j}\big[ \mathbf{T}_n - \mathbf{J}(\vf{y}_n,\bs{\theta}) \big]  \bigg).    
\end{split}
\end{equation}
Imposing $\partial \mathcal{L}/\partial \vf{Y}_{n,i} = 0$, for $n=1,\ldots,N-1$, $i=1,\ldots,s-1$, yields:
\begin{equation}
\label{eqn:discrete_adjoint_equation_internal_stages}
\begin{split}
\bs{\Lambda}_{n,i}^{T} &= \bs{\lambda}_{n+1}^{T} \cdot \bigg\{\sum_{j = 2}^{s} \mathbf{B}_{n,j}\, h\,C_{j-i-1,j}\big[\mathbf{J}(\vf{Y}_{n,i},\bs{\theta}) - \mathbf{T}_{n} \big]\bigg\}  \\
& \quad + \sum_{m = 1}^{s - 1} \bs{\Lambda}_{n,m}^{T} \cdot \bigg\{ \sum_{j = 2}^{m} \mathbf{A}_{n,m,j}\, h C_{j-i-1,j} \big[\mathbf{J}(\vf{Y}_{n,i},\bs{\theta}) - \mathbf{T}_n\big]\bigg\}.
\end{split}
\end{equation}
\end{subequations}
It should be emphasized that \eqref{eqn:discrete_adjoint_equations} is solved from $\tf$ to $\ti$ in reverse time direction.
\item The optimality condition reads: 
\begin{eqnarray}
\label{eqn:gradient_goal_function}
0 &=& \frac{d \goalfull}{d \bs{\theta}} (\bs{\theta}) \\
 &=& \bs{\lambda}_0^T \cdot \frac{\partial \vf{y}_{\rm ini}(\bs{\theta})}{\partial \bs{\theta}} + \sum_{k=1}^N \frac{\partial q_k(\vf{y}_k,\bs{\theta})}{\partial \bs{\theta}} \notag\\
&+& \sum_{k = 0}^{N - 1} \bs{\lambda}_{k+1}^{T} \left(\mathbf{B}_{k,1}\, h \frac{\partial \vf{f}(\vf{y}_{k},\bs{\theta})}{\partial \bs{\theta}} + \displaystyle\sum_{j = 2}^{s} \mathbf{B}_{k,j}\, h  \sum_{\ell = 0}^{j -1} C_{\ell,j}\, \frac{\partial \vf{r}(\vf{Y}_{k, j-1-\ell})}{\partial \bs{\theta}}\right) \notag\\
&+& \sum_{k = 0}^{N - 1}  \sum_{m = 1}^{s - 1} \bs{\Lambda}_{k,m}^{T} \left( \mathbf{A}_{k,m,1}\, h \frac{\partial \vf{f}(y_{k},\bs{\theta})}{\partial \bs{\theta}} + \displaystyle \sum_{j = 2}^{m} \mathbf{A}_{k,m,j}\, h\sum_{\ell = 0}^{j -1} C_{\ell,j}\, \frac{\partial \vf{r}(\vf{Y}_{k, j-1-\ell})}{\partial \bs{\theta}} \right).\nonumber
\end{eqnarray}
\end{enumerate}

\subsection{Algorithmic Differentiation}
\label{sec:ad}

In the previous section, we derived the first-order-optimality conditions for discrete optimization problem in \eqref{eqn:optimal_control_problem_using_epirkw}. To derive the adjoint we use algorithmic differentiation (AD) \cite{neidinger2010introduction}, a technique that generates code to compute the sensitivities of an output with respect to an input for a given program, to compute  gradient of the Lagrangian. The site \url{www.autodiff.org} lists numerous tools that perform AD for programs written in two dozen languages. Excellent resources for the theory and implementation techniques of AD include \cite{naumann2011art,griewank2008evaluating}.

Two different modes of AD exist for first-order sensitivities - forward and reverse. The forward mode AD produces the tangent linear model of the input program, whereas the reverse mode produces the adjoint model. Mathematically, the forward mode computes Jacobian-vector products for the given program, and adjoint mode Jacobian-transposed-vector products, where the vector is some seed direction. Depending on the desire to compute a combination of the rows or columns of the Jacobian one uses the forward or adjoint mode, respectively. For a detailed discussion of cost in terms of operations and memory for each of the two modes we refer to \cite{griewank2008evaluating}. It suffices to say that for the computation of the gradient of the Lagrangian, where the number of inputs far exceeds that of the outputs, the reverse mode is more economical in terms of operation count. 

A drawback of the reverse mode is the necessity to store the entire forward program trajectory that will be required while ``reversing'' the program to compute the sensitivities. If the given program has a loop structure, then all the variables that affect the output and change from one iteration to the next have to be stored in a stack like data structure to be used during reversal of the loop in the program. As a result, storage requirements will grow with long running loop based programs. One technique to alleviate the situation is to use checkpointing \cite{griewank2008evaluating}, where periodically the stack is written to a file before being flushed for reuse. The checkpoint files are read in the reverse order while ``reversing'' the computations to calculate sensitivities. Technique are available to balance the costs of checkpointing and recomputation \cite{griewank2000algorithm}.

In this work we implemented a single-step EPIRK-W integrator in \texttt{Fortran 90} making use of the following major operations:
\begin{itemize}
	\item[-]  Linear combinations of vectors (\texttt{axpy}) are carried out using the built-in array operation syntax.
	\item[-] Matrix vector products (\texttt{gemv}) are implemented as an explicit subroutine call instead of using the Fortran intrinsic (\texttt{MATMUL}).
	\item[-] The right-hand side $\vf{f}(\vf{y})$ is an external callback function provided by the caller.
	\item[-] Computation of  $\bs{\psi}_{i,j}$ function products is rather involved and is explained in detail below.
\end{itemize}
The $\bs{\psi}_{i,j}$ products can be written as a linear combination of individual $\bs{\varphi}_k$  products, where we approximate each $\bs{\varphi}_k$ product in the Krylov-subspace  \cite{Tokman_2006_EPI} as $\bs{\varphi}_k\big(h\,\gamma\, \vf{T}_{n}\big)\vf{b} \approx \norm{\vf{b}} \vf{V}\bs{\varphi}_k(h\,\gamma\, \vf{H})\vf{e}_1$, where $\vf{V}$ is the matrix containing orthogonal basis vectors of the Krylov-subspace $\mathcal{K}_m = \;\textnormal{span}\,\{\vf{b},\vf{T}_{n}\vf{b},\vf{T}_{n}^{2}\vf{b},\hdots\}$ and $\vf{H}$ is the upper-Hessenberg matrix resulting from an Arnoldi process. Following \cite[Theorem1]{Sidje_1998}, $\bs{\varphi}_k(h\,\gamma\, \vf{H})\vf{e}_1$ products are computed by constructing an augmented matrix and exponentiating it using \texttt{Expokit} \cite{Sidje_1998}.

The adjoint of a single-step EPIRK-W integrator is obtained with the help of \texttt{Tapenade} \cite{TapenadeRef13}. We differentiate through both \texttt{axpy} and \texttt{gemv} operations, but use black-box routines (\url{http://www-sop.inria.fr/tropics/tapenade/faq.html#Libs1}) for both $\vf{f}(\vf{y})$ and $\bs{\psi}_{i,j}(h \, \gamma \, \vf{T}_{n})\vf{v}$. The adjoint of $\vf{f}(\vf{y})$ is an external callback provided by the caller. The adjoint implementation of $\bs{\psi}_{i,j}(h \, \gamma \, \vf{T}_{n})\,\vf{v}$ is based on the following result.

\begin{lemma}[Adjoint of matrix-function-times-vector operation]
	\label{lemma:adjoint_of_psi_function}

	Let $\vf{y}_{n}$ be the independent and $\vf{y}_{n+1}$ the dependent of a single-step EPIRK-W method \eqref{eqn:epirk_w_formulation_original}.
	From equations \eqref{eqn:psi_ij} and \eqref{eqn:phi_k} we have:
	\[
	\begin{split}
	\bs{\psi}_{i,j}\big(h\,\gamma\, \vf{T}_{n}\big) &= \sum_{k = 1}^{s} p_{i,j,k}\, \bs{\varphi}_k\big(h\,\gamma\, \vf{T}_{n}\big), \qquad
	\varphi_k(z) = \sum_{i=0}^\infty \frac{z^i}{(i+k)!}\,,
	\end{split}
	\] 
	where $\vf{T}_{n}$ is an arbitrary approximate Jacobian that does not depend on $\vf{y}_{n}$. The adjoint of the operation:
	\[
	\bs{\alpha} \leftarrow \bs{\psi}_{i,j}\big(h\,\gamma\, \vf{T}_{n}\big)\cdot \vf{v}
	\] 
	where  $\vf{v}$ is a vector independent of $\vf{y}_{n}$, reads:	
	\[
	{\bar{\vf{v}} \leftarrow \bs{\psi}_{i,j}\big(h\,\gamma\, \vf{T}_{n}^{T}\big)\cdot \bs{\bar{\alpha}}},
	\]
	where the adjoint variables are $\bar{\vf{v}} = \bar{\vf{y}}_{n+1}^{T} \parfrac{\vf{y}_{n+1}}{\vf{v}}$ and  $\bs{\bar{\alpha}} = \bar{\vf{y}}_{n+1}^{T} \parfrac{\vf{y}_{n+1}}{\bs{\alpha}}$.
\end{lemma}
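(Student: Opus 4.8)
The plan is to treat the assignment $\bs{\alpha}\leftarrow\bs{\psi}_{i,j}(h\,\gamma\,\vf{T}_{n})\cdot\vf{v}$ as a single elementary node of the computational graph of the time step, and to apply the reverse-mode differentiation rule for a linear map whose matrix is \emph{constant} with respect to the differentiation variable. Write $\matf{M}:=\bs{\psi}_{i,j}(h\,\gamma\,\vf{T}_{n})$. By hypothesis neither $\vf{v}$ nor $\vf{T}_{n}$ depends on $\vf{y}_{n}$, so at this node $\bs{\alpha}$ depends linearly on $\vf{v}$ with $\parfrac{\bs{\alpha}}{\vf{v}}=\matf{M}$, and in particular no second-derivative (Hessian) terms of $\vf{f}$ enter — this is precisely the structural simplification afforded by the W-formulation and is what makes the statement clean. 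Using the definitions $\bar{\vf{v}}=\bar{\vf{y}}_{n+1}^{T}\parfrac{\vf{y}_{n+1}}{\vf{v}}$ and $\bs{\bar{\alpha}}=\bar{\vf{y}}_{n+1}^{T}\parfrac{\vf{y}_{n+1}}{\bs{\alpha}}$ from the statement together with the chain rule $\parfrac{\vf{y}_{n+1}}{\vf{v}}=\parfrac{\vf{y}_{n+1}}{\bs{\alpha}}\,\parfrac{\bs{\alpha}}{\vf{v}}$, and reading the adjoint variables as column vectors as in the conclusion of the lemma, we obtain $\bar{\vf{v}}=\matf{M}^{T}\,\bs{\bar{\alpha}}$. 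This reduces the lemma to the purely algebraic identity $\matf{M}^{T}=\bs{\psi}_{i,j}(h\,\gamma\,\vf{T}_{n})^{T}=\bs{\psi}_{i,j}(h\,\gamma\,\vf{T}_{n}^{T})$.

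To establish that identity I would return to the series definitions \eqref{eqn:psi_ij} and \eqref{eqn:phi_k}: $\bs{\psi}_{i,j}(\matf{Z})=\sum_{k=1}^{s}p_{i,j,k}\,\bs{\varphi}_{k}(\matf{Z})$ and $\bs{\varphi}_{k}(\matf{Z})=\sum_{m=0}^{\infty}\matf{Z}^{m}/(m+k)!$, the matrix series being absolutely convergent for every square $\matf{Z}$. Since transposition is a continuous linear involution satisfying $(\matf{Z}^{m})^{T}=(\matf{Z}^{T})^{m}$, it commutes with the convergent series, which gives $\bs{\varphi}_{k}(\matf{Z})^{T}=\bs{\varphi}_{k}(\matf{Z}^{T})$ and hence, by linearity, $\bs{\psi}_{i,j}(\matf{Z})^{T}=\bs{\psi}_{i,j}(\matf{Z}^{T})$. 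Taking $\matf{Z}=h\,\gamma\,\vf{T}_{n}$ and using that $h\,\gamma$ is a real scalar, so $(h\,\gamma\,\vf{T}_{n})^{T}=h\,\gamma\,\vf{T}_{n}^{T}$, yields the claimed identity; substituting it into $\bar{\vf{v}}=\matf{M}^{T}\bs{\bar{\alpha}}$ completes the argument.

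The calculation is essentially routine, and the only point that I would flag explicitly is the interchange of transposition with the infinite matrix series (continuity of $\matf{Z}\mapsto\matf{Z}^{T}$ together with convergence of the partial sums). On the implementation side I would also note, without dwelling on it, that since the $\bs{\varphi}_{k}$-products are evaluated by a Krylov-subspace procedure and \texttt{Expokit}, the generated adjoint must treat that evaluation as a black box so that it actually computes the $\bs{\psi}_{i,j}(h\,\gamma\,\vf{T}_{n}^{T})$-product of $\bs{\bar{\alpha}}$ — i.e., a fresh Arnoldi basis and upper-Hessenberg matrix are built for $\vf{T}_{n}^{T}$ with seed $\bs{\bar{\alpha}}$ on the reverse sweep, rather than reusing the forward-sweep data. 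This is a property of the chosen implementation and not of the lemma itself, so it belongs in a remark rather than the proof.
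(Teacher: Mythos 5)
Your proposal is correct and follows essentially the same route as the paper's proof: reduce the statement to the adjoint rule for a constant linear map (valid since $\vf{T}_{n}$ does not depend on $\vf{y}_{n}$), then verify $\bs{\psi}_{i,j}\big(h\,\gamma\,\vf{T}_{n}\big)^{T} = \bs{\psi}_{i,j}\big(h\,\gamma\,\vf{T}_{n}^{T}\big)$ term by term from the power series of the $\bs{\varphi}_{k}$ functions. Your extra care about the convergence justification for transposing the series, and the implementation remark about the Krylov evaluation, are fine additions but do not change the argument.
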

\begin{proof}
Given an arbitrary matrix $\vf{X}$ and a vector $\vf{v}$ that is a function of the independent $\vf{y}_{n}$, the adjoint of the product $\vf{z} \leftarrow \vf{X} \vf{v}$ is the operation $\bar{\vf{v}} \leftarrow {\vf{X}^{T}} \bar{\vf{z}}$. Consequently, since $\vf{T}_n$ does not depend on $\vf{y}_{n}$, we have:
\[
\bar{\vf{v}} \leftarrow \bs{\psi}_{i,j}\big(h\,\gamma\, \vf{T}_{n}\big)^{T}\cdot\bs{\bar{\alpha}}.
\]
It remains to show that 
\begin{equation}
\bs{\psi}_{i,j}\big(h\,\gamma\, \vf{T}_{n}\big)^{T} = \bs{\psi}_{i,j}\big(h\,\gamma\, \vf{T}_{n}^{T}\big).
\end{equation}
We are given 
\begin{equation*}
\bs{\psi}_{i,j}\big(h\,\gamma\, \vf{T}_{n}\big) = \sum_{k = 1}^{s} p_{i,j,k}\, \bs{\varphi}_k\big(h\,\gamma\, \vf{T}_{n}\big).
\end{equation*}
Taking the transpose then yields  
\begin{equation}
\bs{\psi}_{i,j}\big(h\,\gamma\, \vf{T}_{n}\big)^{T} = \sum_{k = 1}^{s} p_{i,j,k}\, \bs{\varphi}_k\big(h\,\gamma\, \vf{T}_{n}\big)^{T}.
\label{eqn:psi_transposed}
\end{equation}
Since 
\begin{equation*}
\varphi_k(z) = \sum_{i=0}^\infty \frac{z^i}{(i+k)!}\,,
\end{equation*}
we have
\begin{equation}
 \bs{\varphi}_k\big(h\,\gamma\, \vf{T}_{n}\big) = \sum_{i=0}^\infty \frac{\big(h\,\gamma\, \vf{T}_{n}\big)^i}{(i+k)!} = \sum_{i=0}^\infty \frac{(h\,\gamma)^i\, (\vf{T}_{n})^i}{(i+k)!}\,.
\end{equation}
Taking the transpose then yields
\begin{equation}
\bs{\varphi}_k\big(h\,\gamma\, \vf{T}_{n}\big)^{T} = \sum_{i=0}^\infty \frac{(h\,\gamma)^i\, ((\vf{T}_{n})^i)^{T}}{(i+k)!} = \sum_{i=0}^\infty \frac{(h\,\gamma)^i\, (\vf{T}_{n}^{T})^{i}}{(i+k)!} =\bs{\varphi}_k\big(h\,\gamma\, \vf{T}_{n}^{T}\big)\,.
\label{eqn:phi_transposed}
\end{equation}
From equations \eqref{eqn:psi_transposed} and \eqref{eqn:phi_transposed}, it is not too difficult to infer 
\begin{equation}
\bs{\psi}_{i,j}\big(h\,\gamma\, \vf{T}_{n}\big)^{T} = \bs{\psi}_{i,j}\big(h\,\gamma\, \vf{T}_{n}^{T}\big),
\end{equation}
thus proving the result.
\end{proof}

\begin{remark}
If the matrix argument is the exact Jacobian $\vf{J}_{n}$, or any matrix that depends on $\vf{y}_{n}$, then the adjoint of the operation
	\[
	\bs{\alpha} \leftarrow \bs{\psi}_{i,j}\big(h\,\gamma\, \vf{J}_{n}\big)\cdot \vf{v}
	\] 
involves the additional computation of the Hessian tensor $\partial \vf{J}_{n}/\partial \vf{y}_{n}$. This is the main motivation for using a W-method formulation of EPIRK in our work: the assumption that matrices $\vf{T}_{n}$ are independent of $\vf{y}_{n}$ allows the adjoint operations to be implemented in accordance with Lemma \ref{lemma:adjoint_of_psi_function}, while the W-property allows maintaining the overall accuracy of the forward simulation.
\end{remark}

In the following section we consider two test problems where a 4D-Var problem is solved for parameter/initial state estimation.  The discrete adjoint EPIRK-W integrator is used to compute the gradients needed in optimization.

\section{Applications}
\label{sec:applications}
In this section, two different numerical examples will be given. As a first test case, we consider the Lorenz-96 model, which is a discrete nonlinear example, where the parameter is the initial condition. As a second test case, an inductor will be considered. The original equation is obtained by applying the finite element method to the magnetoquasistatic partial differential equations. Then, data assimilation is carried out for parameters characterizing the nonlinear magnetic material in the iron core. For both examples a  \texttt{Python} version of the limited memory variant of the Broyden-Fletcher-Goldfarb-Shanno (BFGS) algorithm \cite{byrd1995limited} is used for optimization. We use the third-order EPIRK-W method derived in \cite[Figure 2]{Narayanamurthi_2017_EPIRKK} in our experiments. The integrator and its adjoint is made callable from \texttt{Python} using \texttt{f2py} and checkpoints are stored in-memory.

\subsection{Lorenz-96 Model}
The model considered here was originally proposed in \cite{lorenz1996predictability} and used in a data assimilation context in \cite{singh2013practical}, where a detailed description of the setup can be found. Its main points are summarized here to enhance readability. The deterministic model is given by the set of equations:
\begin{align}
\frac{d y^j}{d t} &= -y^{j-1}(y^{j-2}-y^{j+1}-y^j) + 8, \quad j=1,\ldots,K, \nonumber \\
y^j &= y^{K+j} \quad \forall j,
\label{eqn:lorenz_96_model_equations}
\end{align}
where $K=40$ and $y=(y^1,\ldots,y^K)^T$. Let $t_0=0$, $\tf=0.3$ and the time interval be $\Delta t=0.015$ time units (time units).

We ran convergence test for the Lorenz-96 model of both the forward and the adjoint integrator with the Jacobian initialized to a random matrix at the beginning, and using a fixed random seed for the adjoint integrator. Reference solutions for both forward and adjoint problem were computed using the \texttt{MATLODE} \cite{Sandu_2015_MATLODE}  \texttt{ERK} forward and adjoint integrators, respectively, with both absolute and relative tolerance set to \num{1.0e-12}.


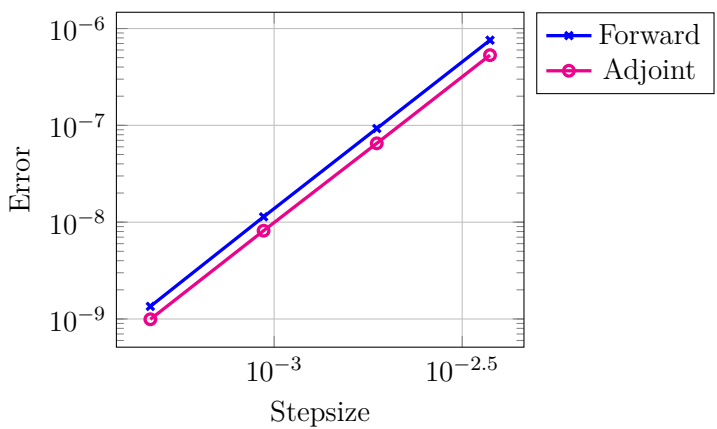
\begin{figure}[th!]
	\centering
	\pgfplotstableread{data_fixed_step_convergence.txt}{\fixedConvergence}
	\begin{tikzpicture}
	\begin{loglogaxis}[xlabel=Stepsize,ylabel=Error,ymajorgrids=true,xmajorgrids=true,legend pos = outer north east]
	\addplot[color=blue,mark=x,line width=1.25pt] table [y=y_err, x=dt_vals]{\fixedConvergence};
	\addplot[color=magenta,mark=o,line width=1.25pt] table [y=l_err, x=dt_vals]{\fixedConvergence};
	\legend{Forward,Adjoint}
	\end{loglogaxis}
	\end{tikzpicture}
	\caption{Convergence of the forward and adjoint integrator for the Lorenz-96 model \eqref{eqn:lorenz_96_model_equations}, where the error is computed with respect to reference in Euclidean norm.}
	\label{fig:fixed_step_convergence}
\end{figure}

As Figure \ref{fig:fixed_step_convergence} shows, both forward and adjoint EPIRK-W integrators achieve full third order convergence in the Euclidean norm, as desired. It should be noted, however, that either the exact Jacobian or a very good approximation of it may be needed for better stability of the integrators. In \cite{Narayanamurthi_2017_EPIRKK}, the convergence behavior of the forward EPIRK-W integrator was treated in detail. A complete treatment of the convergence behavior of the adjoint EPIRK-W integrator will be considered in a future work. We continue to describe the example in which we apply these integrators.

In this example, $\bs{\theta}$ parametrizes the initial condition as $\vf{y}_{\rm ini} = \bs{\theta}$, whereas $\vf{f}$ is a function of $\vf{y}$, solely. Following \citep{singh2013practical}, the reference $\bs{\theta}_\textsc{true}$ is obtained by integrating $y^j(-10 \Delta t)=1+0.1 \mathrm{mod}(j,5)$ in time until $t=0$ (time units). The observation operator $\mathcal{H}$ is defined as:
\begin{equation}
\mathcal{H}(\vf{y}) = (y^1,y^3,y^5,\ldots,y^{19},y^{21},y^{22},\ldots,y^{40},\sum_{j=1}^{10} y^j,\sum_{j=1}^{20} y^j,\sum_{j=21}^{40} y^j,\sum_{j=31}^{40} y^j)^T.
\end{equation}
We model the background and observation standard deviation as: 
\begin{align*}
\bs{\sigma}_\textsc{b} &= 0.03 \ \bs{\theta_\textsc{true}}, \\
\bs{\sigma}_\textsc{obs} &= \left(0.005 \left(\sum_{i=1}^{N_\textsc{obs}} \mathcal{H}(\vf{y}_{j_i}^\textsc{true}) \right)/N_\textsc{obs} \right)^{-1},
\end{align*}
where $\vf{y}_{j_i}^\textsc{true}$ refers to the solution at step $j_i = i 100$, $i=1,\ldots,N_\textsc{obs}=10$, with initial condition $\bs{\theta_\textsc{true}}$. These standard deviations are used to define the background and observation covariance matrices as
\begin{align}
\mathbf{B} &=  \alpha \mathbf{I} + (1-\alpha) \bs{\sigma} \otimes \bs{\sigma} \mathrm{e}^{-\frac{\mathbf{D}^2}{L^2}}, \nonumber \\ 
\vf{R}_{i} &= \vf{R} = \mathrm{diag}((\bs{\sigma}_\textsc{obs})^{2}),
\end{align}
with $\alpha=0.1$, $L=4$, $(\mathbf{D})_{i,j} = \min(|i-j|,K-|i-j|)$. We emphasize that $\mathbf{R}$ is modeled to be independent of time in this case. Also, in this section, $\mathbf{I}$ refers to the $K\times K$ identity matrix. The background prior $\bs{\theta}_\textsc{b}$ is generated as a pseudo-random realization according to the normal distribution of $\error_\textsc{b}$ with mean value $\bs{\theta}_\textsc{true}$ and covariance $\mathbf{B}$. Correspondingly, the measurement $\vf{z}_i$ is generated as a pseudo-random realization around the observation reference according to the normal distribution of $\error_\textsc{obs}$, with mean value $\mathcal{H}(\vf{y}_{j_i}(\bs{\theta}_\textsc{true}))$ and covariance $\mathbf{R}$. 

We perform data assimilation by solving \eqref{eqn:optimal_control_problem_using_epirkw} using the goal function and gradient given in \eqref{eqn:4dvar_cost_function} and \eqref{eqn:4dvar_gradient}, respectively. The discrete adjoints are computed using algorithmic differentiation as outlined in Section \ref{sec:ad} and supplied to the optimization routine, with the approximation $\vf{T}_{n} = \vf{J}_{n}$ used in the forward and adjoint integrators. As can be seen from Figure \ref{fig:convergence_optimization} the algorithm converges quickly. More precisely, the cost function does not change significantly after two iterations, whereas a gradient of below $\num{1.0e-4}$ is obtained within 8 iterations, for a step size of $\Delta t= 0.0003$ time units. Figure \ref{fig:convergence_optimization_parameters} depicts the difference of the initial condition at the current iteration and the true initial condition in the Euclidean norm. 
 
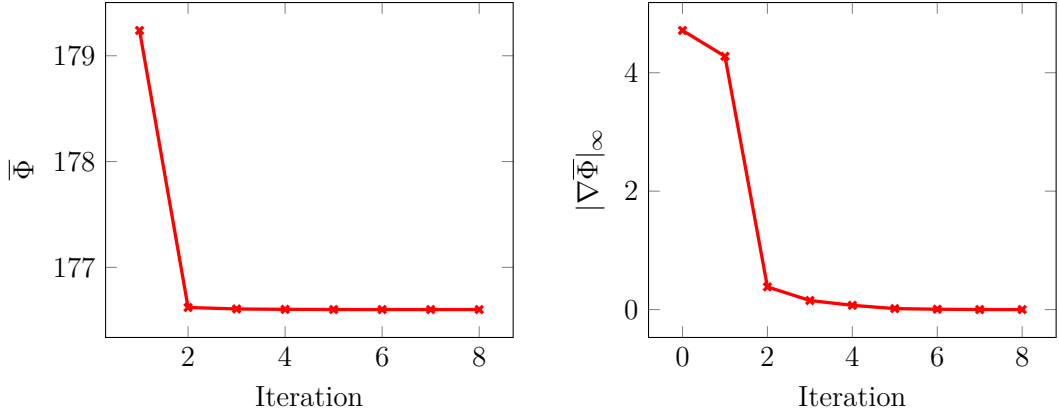
\begin{figure}
\begin{minipage}[t!]{0.49\textwidth} 
\centering
\pgfplotstableread{data_optimization_Lorenz_cost.txt}{\LorenzCost}
\begin{tikzpicture}
\begin{axis}[xlabel=Iteration,
ylabel=$\goal$]
\addplot[color=red,mark=x,restrict y to domain=170:180,line width=1.25pt] table {\LorenzCost};
\end{axis}
\end{tikzpicture}
\end{minipage}
\begin{minipage}[t!]{0.49\textwidth} 
\centering
\pgfplotstableread{data_optimization_Lorenz_gradient.txt}{\LorenzGradient}
\begin{tikzpicture}
\begin{axis}[
xlabel=Iteration,
ylabel=$|\nabla \goal|_\infty$]
\addplot[color=red,mark=x,line width=1.25pt] table {\LorenzGradient};
\end{axis}
\end{tikzpicture}
\end{minipage}
\caption{Cost function and gradient evolving during iterations of optimization algorithm for a time step size of $\Delta t = 0.0003$ time units.}
\label{fig:convergence_optimization}
\end{figure} 

\begin{figure}
\centering
\pgfplotstableread{data_optimization_Lorenz_parameters.txt}{\LorenzParameter}
\begin{tikzpicture}
\begin{axis}[xlabel=Iteration,
ylabel=$|\theta - \theta_\textsc{true}|$]
\addplot[color=red,mark=x,line width=1.25pt] table {\LorenzParameter};
\end{axis}
\end{tikzpicture}
\caption{Difference of initial condition and true initial condition in the Euclidean norm.}
\label{fig:convergence_optimization_parameters}
\end{figure}
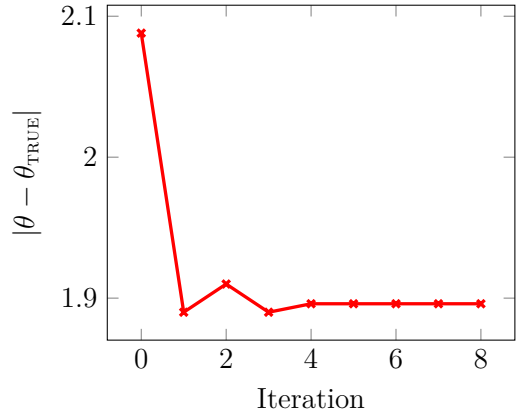

\subsection{Parameter Estimation in Magnetic Field Problems}
Before describing the data assimilation problem for the inductor, we give some background of the underlying partial differential equations. The magnetoquasistatic problem in a bounded computational domain $\Omega$ reads as
\begin{subequations}
\begin{align}
\curl \vec{E} &= - \parfrac{\vec{B}}{t}, \quad &&\mathrm{in} \ \Omega, \\
\curl \vec{H} &= \vec{J} + \kappa \vec{E}, \quad &&\mathrm{in} \ \Omega, \\
\vec{B} \cdot \vec{n} &= 0, \quad &&\mathrm{on} \ \partial \Omega,
\end{align}
\label{eq:eddy_current}%
\end{subequations}
and is widely used in the simulation of electrical machines, magnets and transformers. In \eqref{eq:eddy_current}, $\vec{E}$ denotes the electric field, $\kappa$ the conductivity, $\vec{J}$ the imposed source current density and $\vec{n}$ the outer unit normal. Also, the magnetic flux density $\vec{B}$ and the magnet field strength $\vec{H}$ are related through the nonlinear material relation $\vec{H}= \nu(|\vec{B}|)\vec{B}$. Here, we restrict ourselves to parametric nonlinearities of the type
\begin{equation}
\nu(|\vec{B}|) = \frac{1}{2 |\vec{B}|}\left(\tanh \left(\frac{|\vec{B}|}{\theta_1}\right) + \tanh \left(\frac{|\vec{B}|}{\theta_2}\right)^{30}\right) \left(\theta_3 + \theta_4 |\vec{B}|\right),
\label{eqn:nonlinearity}
\end{equation}
see \cite{rosseel2010nonlinear}, where $\nu(0)$ is obtained by taking the limit $|\vec{B}|\rightarrow 0$ in the above equation. The parameters $\bs{\theta}$ of the nonlinearity are often deduced from measurements. Equation \eqref{eq:eddy_current} is discretized in space by applying the finite element method. To this end, as a first step, from \eqref{eq:eddy_current} we derive the second order problem
\begin{subequations}
\begin{align}
\kappa \frac{\partial \vec{A}}{\partial t} + \curl \left( \nu(\bs{\theta},|\curl \vec{A}|) \curl \vec{A} \right) &= \vec{J}, \quad &&\mathrm{in} \ \Omega, \\
\vec{A} \times \vec{n} &= 0, \quad &&\mathrm{on} \ \partial \Omega,
\end{align}
\label{eq:eddy_current_second_order}%
\end{subequations}
with the magnetic vector potential $\vec{A}$ such that $\vec{B}= \curl \vec{A}$ and the initial condition $\vec{A}(0)= \vec{A}_0$ in $\Omega$, see, e.g., \cite{biro1999edge}. We further simplify the problem by considering a two-dimensional domain. By inserting $\vec{A}=(0,0,u)^T$ in \eqref{eq:eddy_current_second_order}, where the third coordinate represents the direction perpendicular to the two-dimensional domain, we obtain
\begin{subequations}
\begin{align}
\kappa \frac{\partial u}{\partial t} - \nabla \left( \nu(\bs{\theta},|\nabla u|) \nabla u \right) &= f, \quad &&\mathrm{in} \ \Omega, \\
u &= 0, \quad &&\mathrm{on} \ \partial \Omega.
\end{align}
\label{eq:eddy_current_second_order_two_dimensional}%
\end{subequations}
Difficulties arise in the solution of problem \eqref{eq:eddy_current_second_order_two_dimensional}, as $\kappa$ vanishes in non-conducting sub-regions of $\Omega$. A remedy consists in using $\kappa_\text{reg} = \max(\kappa,\kappa_\text{air})$, with artificial conductivity $\kappa_\text{air} > 0$, in \eqref{eq:eddy_current_second_order} instead. The error of this approximation as a function of $\kappa_\text{air}$ is well understood, see, e.g., \cite{bachinger2005numerical}. We assume that $\kappa$ is replaced with $\kappa_\text{reg}$ from now on and omit the subscript for simplicity. 

To state the weak formulation of \eqref{eq:eddy_current_second_order_two_dimensional} we introduce the usual Sobolev space
\begin{equation}
V = \{u \in L^2(\Omega) \ | \ \nabla u \in (L^2(\Omega))^2 \ \text{and} \ u|_{\partial \Omega} = 0\}. 
\end{equation}
Then, we seek $u: [t_0,\tf] \rightarrow V$ such that 
\begin{equation}
\int_{\Omega} \kappa \frac{\partial u}{\partial t} v \ \mathrm{d}x + \int_{\Omega} \nu(\bs{\theta},|\nabla u|) \nabla u \cdot \nabla v \ \mathrm{d}x = \int_{\Omega} f v \ \mathrm{d}x, \ \forall  v \in V.
\end{equation}

The finite element space $V_h \subset V$ consists of piecewise linear polynomials on a quasi-uniform triangulation of $\Omega$. This results in an ordinary differential equation
\begin{equation}
\mathbf{M} \vf{y}_\text{FE}' + \mathbf{K}(\vf{y},\bs{\theta}) \vf{y}_\text{FE} = \vf{f}_\text{FE},
\label{eq:eddy_current_discrete}
\end{equation}
where $\mathbf{M},\mathbf{K},\vf{y}_\text{FE},\vf{f}_\text{FE}$ refer to the mass and stiffness matrix and to the vector of degrees of freedom and the source vector, respectively. Problem \eqref{eq:eddy_current_discrete} can be recast as
\begin{equation}
\vf{y}_\text{FE}' = \vf{f}(\vf{y}_\text{FE},\bs{\theta}), \qquad \vf{y}_\text{FE} = \vf{y}_{\rm ini},
\label{eq:eddy_current_discrete_modified}
\end{equation}
where $\vf{f}(\vf{y}_\text{FE},\bs{\theta})= \mathbf{M}^{-1}(\vf{f}_\text{FE} - \mathbf{K}(\vf{y}_\text{FE},\bs{\theta})\vf{y}_\text{FE})$. From \eqref{eq:eddy_current_discrete_modified} it is clearly visible that the parameter dependence is bound to $\vf{f}$ solely.


The example considered here is an inductor, introduced in \cite{meeker}, which is depicted in Figure \ref{fig:inductor}. The observed (scalar) quantity is the flux linkage defined as 
\begin{equation}
\mathcal{H} = \frac{N_\text{turns}}{|\Omega_\text{coil}|} \int_{\Omega_\text{coil}} \frac{\partial u}{\partial t} \ \mathrm{d}x, 
\end{equation}
where $N_\text{turns}$, $\Omega_\text{coil}$ and $|\Omega_\text{coil}|$ refer to the number of turns in the coil, the coil domain (red in Figure \ref{fig:inductor}) and its area, respectively.

\begin{figure}[t!]	
\begin{minipage}[h!]{0.55\textwidth}
\centering
\begin{tikzpicture}
\node at (0.99,-0.03) {\includegraphics[scale=0.15]{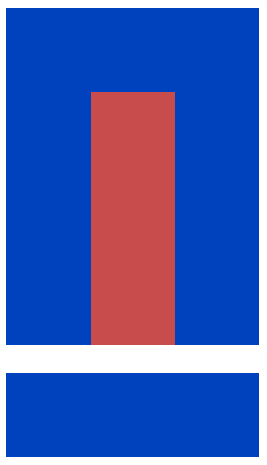}};
\node at (3.15,0) {\includegraphics[scale=0.15]{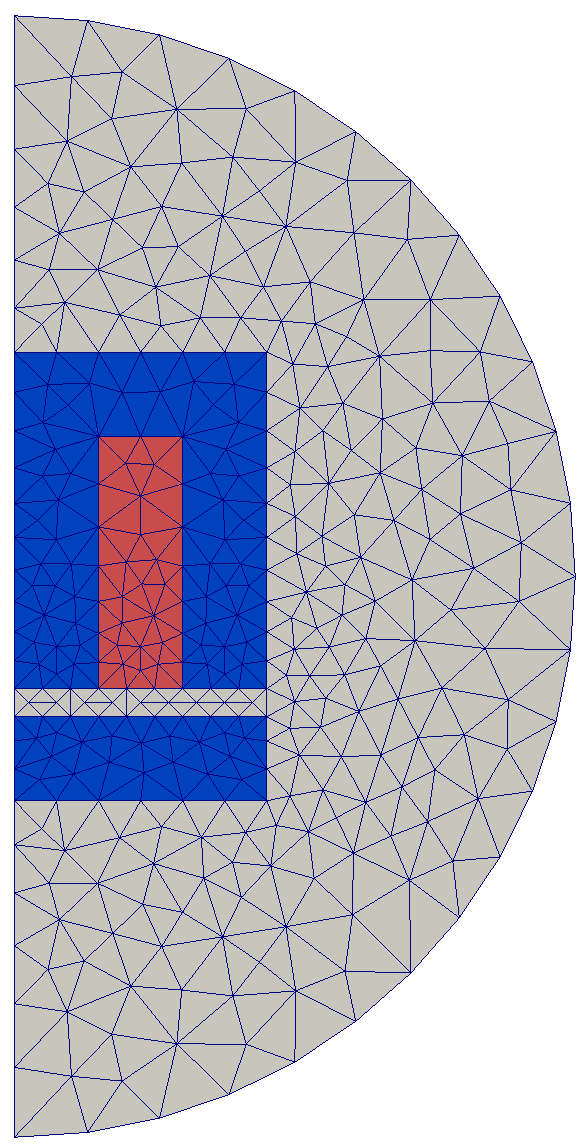}};
\end{tikzpicture}
\end{minipage}
\begin{minipage}[h!]{0.44\textwidth}
\begin{tikzpicture}
\node at (0,0.5) {\includegraphics[scale=0.15]{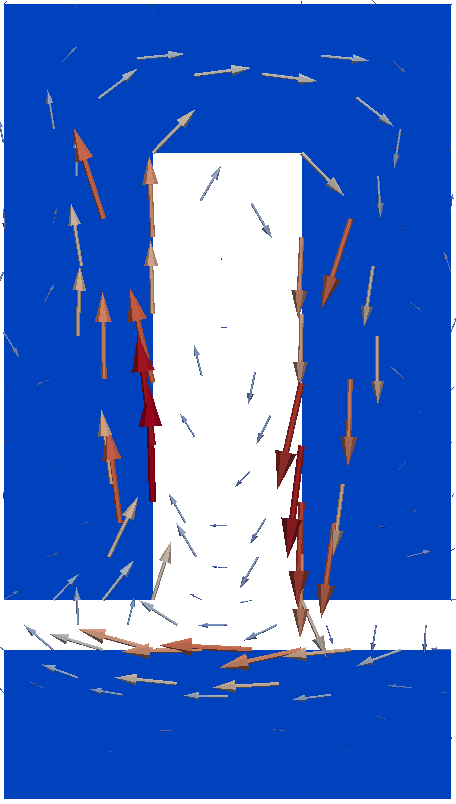}};
\node at (2.3,2.4) {\small $|\vec{B}|$ (T)};
\node at (2.4,0.3) {\includegraphics[scale=0.2]{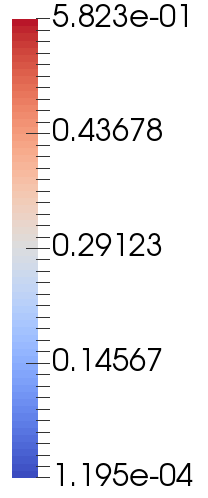}};
\end{tikzpicture}
\end{minipage}
\caption{Left: inductor configuration with airgap, coil (red) and iron core (blue). Due to symmetry only the right part is used for the simulation. The corresponding computational domain is depicted (air in gray) with the triangular mesh. Right: magnetic flux density in the iron core.}
\label{fig:inductor}
\end{figure}

The parameter estimation problem is set up as follows. We assume that the true parameters of the nonlinearity \eqref{eqn:nonlinearity} are given by:
\[
\bs{\theta_\textsc{true}} = \big[\num{2.88e+3},\num{5.99e+7}, \num{4.35e+4},1.89\big].
\]
Here, the background and measurement covariance are modeled as:
\begin{align*}
\mathbf{B} &= \mathrm{diag}(0.09 \ \theta_i^2), \\
\mathbf{R}_{i} &=  (\sigma_{\textsc{obs}})^2,
\end{align*}
i.e., both the parameters and the measurements are assumed to be uncorrelated. We consider a sinusoidal current excitation $I(t) = I_0 \sin(4 \pi t)$ on the time interval $[0,0.5]$ s. Measurements are taken at time steps $j_i = 10 i$, for $i=1,\ldots,5=N_\textsc{obs}$. The measurement standard deviation is modeled to be $0.1$ percent of the effective measurement value:
\[
\sigma_{\textsc{obs}} = 0.001\, \sqrt{\sum_{i=1}^{N_\textsc{obs}} \mathcal{H}(\vf{y}_{j_i}(\bs{\theta}_\textsc{true}))^2}.
\]
Then, the background prior $\bs{\theta}_\textsc{b}$ is generated as a pseudo-random realization according to a normal distribution with mean $\bs{\theta_\textsc{true}}$ and covariance $\mathbf{B}$. Accordingly, measurements $z_i$ are generated as pseudo-random realizations, according to a normal distribution with covariance $\vf{R}$ around the \emph{true} trajectory. 

With the goal function and gradient as in equations \eqref{eqn:4dvar_cost_function} and \eqref{eqn:4dvar_gradient}, we apply the EPRIK-W method to solve \eqref{eq:eddy_current_discrete_modified}. This problem is very stiff and it warrants the use of the exact Jacobian, i.e. $\vf{T}_{n} = \vf{J}_{n}$, to enhance stability. We allow for a 20 percent variation of the coefficients during optimization. For the simulation a step size of $\Delta t = 0.01$ s is employed. Using symmetry only the right half of the configuration is used in the computations. A current of $I_0 = 150$ A per turn, with $N_\text{turns}=66$ turns in total, is imposed to the coil. The associated flux distribution in the core is depicted in Figure \ref{fig:inductor} on the right. Meshing and finite element analysis are carried out with Gmsh \cite{geuzaine2009gmsh} and FEniCS \cite{AlnaesBlechta2015a,LoggMardalEtAl2012a}, respectively. To reduce the stiffness of the problem we set $\kappa_\text{air} = \num{1.0e+6} \ \mathrm{S}\mathrm{m}^{-1}$, which has a negligible effect on the flux linkage. 

Figure \ref{fig:convergence_optimization_inductor} shows the cost function and the gradient over the iteration steps of the optimization. Again, we observe a fast convergence of the cost function. In Figure \ref{fig:parameter_convergence_inductor} we plot the difference of parameter $\theta_4$ and its true value, during optimization. The error in $\theta_4$ decays quickly, while the other parameters remain unchanged. This can be attributed to the fact that the derivative of the objective function with respect to $\theta_4$ is orders of magnitudes larger than the derivatives with respect to $\theta_1, \theta_2$ and $\theta_3$. The observation is further supported by the fact that $\theta_4$ models the saturated range of the nonlinearity \cite{rosseel2010nonlinear}, which contributes significantly to the shape of the waveform depicted in Figure \ref{fig:assimilation_inductor}. As this shape is modified during data assimilation to fit the measurements, $\theta_4$ plays a key role here. Figure \ref{fig:assimilation_inductor} depicts the flux linkage over the time interval for the prior and the parameters after assimilation together with the measurements. The trajectory after data assimilation is in better accordance with the measurement data. Finally, Table \ref{tab:4dvar_inductor} summarizes the measurement values as well as the numerical predictions before and after data assimilation. 

\begin{figure}[t!]
\begin{minipage}[t!]{0.49\textwidth} 
\centering
\pgfplotstableread{data_optimization_inductor_cost.txt}{\inductorCost}
\begin{tikzpicture}
\begin{axis}[xlabel=Iteration,
ylabel=$\goal$]
\addplot[color=red,mark=x,line width=1.25pt] table {\inductorCost};
\end{axis}
\end{tikzpicture}
\end{minipage}
\begin{minipage}[t!]{0.49\textwidth} 
\centering
\pgfplotstableread{data_optimization_inductor_gradient.txt}{\inductorGradient}
\begin{tikzpicture}
\begin{axis}[
xlabel=Iteration,
ylabel=$|\nabla \goal|_\infty$]
\addplot[color=red,mark=x,line width=1.25pt] table {\inductorGradient};
\end{axis}
\end{tikzpicture}
\end{minipage}
\caption{Goal function and projected gradient evolving during iterations of optimization algorithm for a time step size of $\Delta t = 0.01$.}
\label{fig:convergence_optimization_inductor}
\end{figure}
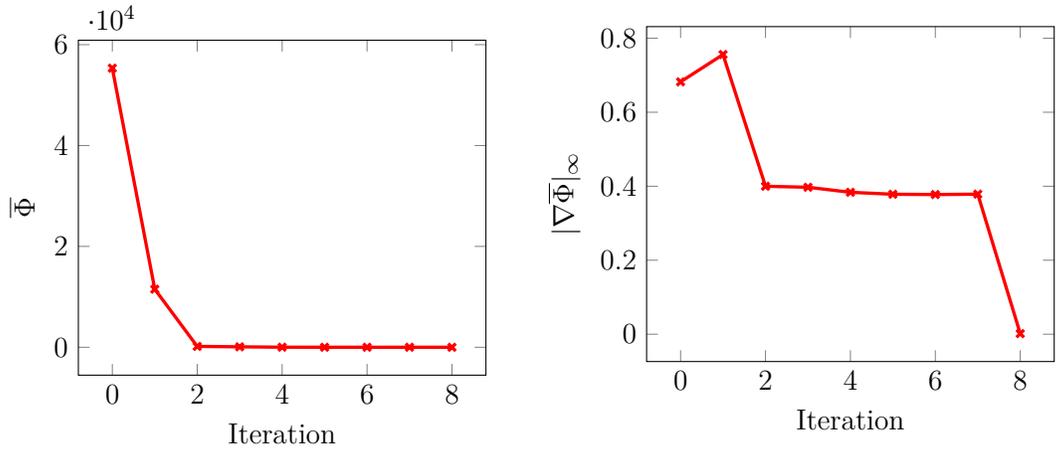
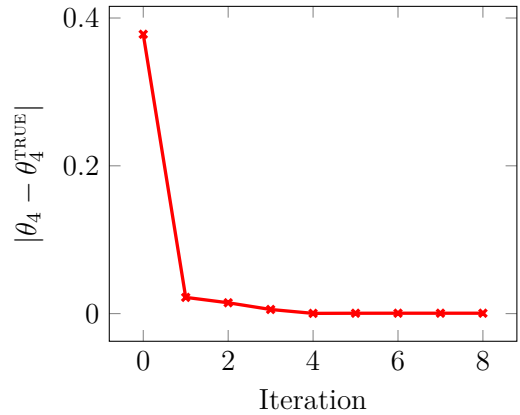
\begin{figure}[t!]
	\centering
	\pgfplotstableread{data_optimization_inductor_parameter_convergence.txt}{\inductorMeasErr}
	\begin{tikzpicture}
	\begin{axis}[xlabel=Iteration,ylabel=$\abs{\vf{\theta}_4 - \vf{\theta}_4^\textsc{true}}$]
	\addplot[color=red,mark=x,line width=1.25pt] table [y=Errortheta4, x=Iteration]{\inductorMeasErr};
	\end{axis}
	\end{tikzpicture}
	\caption{Absolute error in predicted $\theta_4$ from true $\theta_4^{\textsc{true}}$. Error for other components remained constant and are as follows $\abs{\vf{\theta}_1 - \vf{\theta}_1^\textsc{true}} = 6.496$, $\abs{\vf{\theta}_2 - \vf{\theta}_2^\textsc{true}} = 1443.266$, $\abs{\vf{\theta}_3 - \vf{\theta}_3^\textsc{true}} = 65.979$, and in a relative sense \num{2e-3}, \num{2.4e-5}, \num{1.0e-3} , respectively.}
	\label{fig:parameter_convergence_inductor}
\end{figure}
\begin{figure}[t!]
\centering
\pgfplotstableread{data_optimization_inductor_meas.txt}{\inductorMeas}
\pgfplotstableread{data_optimization_inductor_trajectory.txt}{\inductorBackground}
\pgfplotstableread{data_optimization_inductor_trajectory_2.txt}{\inductorOptimal}
\begin{tikzpicture}
\begin{axis}[xlabel=$t$,
ylabel=flux linkage, legend pos = outer north east]
\addplot[color=black,mark=square, only marks, y = 2nd col,mark size=1.5] table {\inductorMeas};
\addplot[color=blue,dashed,line width=1.25pt] table {\inductorBackground};
\addplot[color=magenta,line width=1.25pt] table {\inductorOptimal};
\legend{Measurements,Background,Optimum}
\end{axis}
\end{tikzpicture}
\caption{Measurements and simulation, with parameters before and after parameter estimation, of flux linkage over time}
\label{fig:assimilation_inductor}
\end{figure}
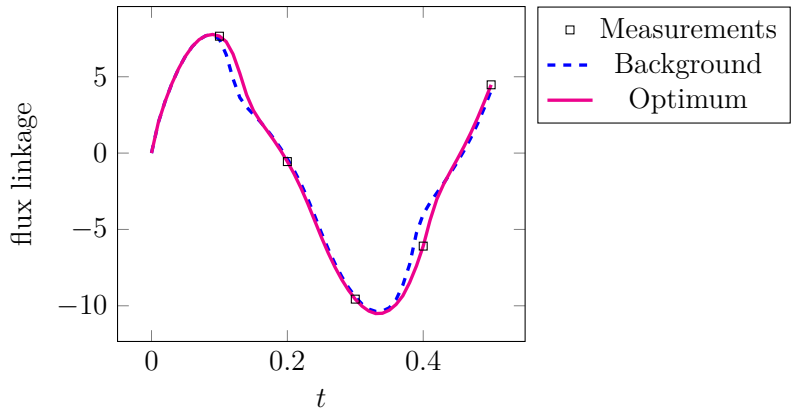  

\begin{table}[t!]
\centering
\begin{tabular}{|c|c|c|c|}
\hline
 & Measurement & Background & Optimum \\
 \hline
$z_1$ & 7.656 & 7.450 & 7.660\\
\hline
$z_2$ & -0.556 & -0.377 & -0.554 \\
\hline
$z_3$ & -9.565 & -9.414 & -9.571 \\
\hline
$z_4$ & -6.092 & -4.004 & -6.092\\
\hline
$z_5$ & 4.472 & 4.098 & 4.470 \\
\hline 
\end{tabular}
\caption{Measurements compared to background simulation and simulation after data assimilation in $\mathrm{Wb}\mathrm{s}^{-1}$.}
\label{tab:4dvar_inductor}
\end{table}


\section{Conclusions}
\label{sec:conclusions}
This work derives the discrete adjoint formulae for a general class of exponential integrators, EPIRK-W. The choice of methods with the W-property allows to use arbitrary approximations of the Jacobian as arguments of the matrix functions while maintaining the overall accuracy of the forward integration. The use of matrices that do not depend on the model state avoids the complex calculation of Hessians (derivatives of the Jacobian with respect to state variables) in the discrete adjoint formulae. The simplified discrete adjoint can computed via algorithmic differentiation and then supplied to optimization routines for solving the inverse problem at hand. We were able to empirically verify that the convergence order of the discrete adjoint of the EPIRK-W method matches the order of convergence of the forward EPIRK-W integrator. The methodology was applied to estimate the initial condition of the Lorenz-96 model from synthetic measurement data. The methodology was also applied to estimate magnetic material parameters for computational magnetics problems, where a system of stiff ordinary differential equations is obtained by a method of lines approach. Exponential integrators are a promising alternative for this class of problems and this topic merits further investigation. 

\section*{Acknowledgements}
\label{sec:ack}
The work of M. Narayanamurthi and A. Sandu was supported in part by awards NSF DMS--1419003, NSF CCF--1613905, AFOSR DDDAS 15RT1037, and by the Computational Science Laboratory at Virginia Tech. Part of the work was performed during {U.~R\"omer's} visit at Virginia Tech.

\bibliographystyle{gOMS}
\bibliography{adjoints_exp,ode_exponential,ode_general,ode_krylov,sandu}

\end{document}